\newtheorem{thm}[subsection]{Theorem}
\newtheorem{cor}[subsection]{Corollary}
\newtheorem{lemma}[subsection]{Lemma}
\theoremstyle{definition}
\numberwithin{equation}{section}
\def\lie#1{{\mathfrak #1}}
\def\phi{{\varphi}}
\def\cI{{\mathcal I}}
\def\cV{{\mathcal V}}
\def\cW{{\mathcal W}}
\def\gg{{\mathfrak g}}
\def\gl{{\mathfrak l}}
\def\gs{{\mathfrak s}}
\newfont{\german}{eufm10}
\begin{document}
\pagestyle{plain}

\title
{The $\mathbb{Z}_{2}$-orbifold of the universal affine vertex algebra}

\author{Masoumah Al-Ali}
\email{zlutf95@hotmail.com}

\begin{abstract} Let $\gg$ be a simple, finite-dimensional complex Lie algebra, and let $V^k(\gg)$ denote the universal affine vertex algebra associated to $\gg$ at level $k$. The Cartan involution on $\gg$ lifts to an involution on $V^k(\gg)$, and we denote by $V^k(\gg)^{\mathbb{Z}_2}$ the orbifold, or fixed-point subalgebra, under this involution. Our main result is an explicit minimal strong finite generating set for $V^k(\gg)^{\mathbb{Z}_2}$ for generic values of $k$. In the case $\gg = \gs\gl_2$, we also determine the set of nongeneric values of $k$, where this set does not work. \end{abstract}

\maketitle
\section{Introduction}
 Starting with a vertex algebra $\mathcal{V}$ and a group $G$ of automorphisms of $\mathcal{V}$, the invariant subalgebra $\mathcal{V}^G$ is called a $G$-{\it orbifold} of $\mathcal{V}$. Many interesting vertex algebras can be constructed either as orbifolds or as extensions of orbifolds. A remarkable example is the Moonshine vertex algebra $V^{\natural}$, which is an extension of the $\mathbb{Z}_2$-orbifold of the lattice vertex algebra associated to the Leech lattice \cite{B,FLM}. A substantial literature has evolved on the structure and representation theory of orbifolds under finite group actions including \cite{DVVV,DHVW,DM,DLMI,DLMII,DRX}. It is widely believed that nice properties of $\mathcal{V}$ such as $C_2$-cofiniteness and rationality will be inherited by $\mathcal{V}^G$ when $G$ is finite. In \cite{M}, Miyamoto proved the $C_2$-cofiniteness of $\mathcal{V}^G$ when $G$ is cyclic. Also, he recently established the rationality with Carnahan in \cite{CM}.

Many vertex algebras depend continuously on a parameter $k$ such as the universal affine vertex algebra $V^k(\lie{g})$ associated to a simple, finite-dimensional Lie algebra $\lie{g}$. Another example is the $\mathcal{W}$-algebra $\mathcal{W}^k(\lie{g},f)$ associated to $\lie{g}$ together with a nilpotent element $f\in \lie{g}$. Typically, if $\mathcal{V}^k$ is such a vertex algebra depending on $k$, it is simple for generic values of $k$ but has a nontrivial maximal proper graded ideal $\mathcal{I}_k$ for special values. Often, one is interested in the structure and representation theory of the simple graded quotient $ \mathcal{V}^k / \cI_k$ at these points. This is illustrated by Frenkel and Zhu in \cite{FZ} to prove the $C_2$-cofiniteness and rationality of simple affine vertex algebras at positive integer level, and by Arakawa in \cite{A} to prove the $C_2$-cofiniteness and rationality of several families of $\mathcal{W}$-algebras.

Let $\cV^k$ be such a vertex algebra and let $G \subset \text{Aut}(\cV^k)$ be a reductive group of automorphisms. In addition to determining the generic structure of $(\cV^k)^G$, it is important to determine the \textit{nongeneric} set, where the strong generating set does not work. By a general result of \cite{CL}, this set is always finite and consists at most of the poles of the structure constants of the OPE algebra among the generators. Determining this set explicitly is not an easy problem even using a computer, although examples where it has been worked out appear in \cite{ACL,ACKL,AL}.

The primary objective of finding the nongeneric points is that it allows us to study orbifolds of the simple quotient $\cV_k$ of $\cV^k$, provided that $k$ is generic in the above sense. The quotient homomorphism $\cV^k \rightarrow \cV_k$ always restricts to a surjective homomorphism $$(\cV^k)^{G} \rightarrow (\cV_k)^{G},$$ so a strong generating set for $(\cV^k)^{G}$ descends to a strong generating set for $(\cV_k)^{G}$. In the examples we consider, the most interesting values of $k$, for which $(\cV)^G$ is highly reducible, turn out to be generic, so we obtain strong generators for $(\cV_k)^{G}$ as well.

In this paper, we study $V^k(\lie{g})^{\mathbb{Z}_{2}}$. Here $\lie{g}$ is a simple, finite-dimensional Lie algebra and $V^k(\lie{g})$ denotes the universal affine vertex algebra of $\mathfrak{g}$ at level $k$. There is an involution of $\lie{g}$ known as the {\it Cartan involution}, and it gives rise to the action of $\mathbb{Z}_2$ on $V^k(\lie{g})$. Let $l = \text{rank}(\lie{g})$ and let $m$ be the number of positive roots, so that $\text{dim}(\lie{g}) = 2m+ l$. Our main result is that for any $\lie{g}$ with $\text{dim}(\lie{g})> 3$, $V^{k}(\lie{g})^{\mathbb{Z}_{2}}$ is of type $$\mathcal{W}\big(1^{m},2^{d+ \binom{d}{2}},3^{ \binom{d}{2}},4\big),$$ for generic values of $k$. Here $d = m + l$. In this notation, we say that a vertex algebra is of type $\mathcal{W}((d_1)^{n_1},\dots (d_r)^{n_r})$ if it has a minimal strong generating set consisting of $n_i$ fields in weight $d_i$, for $i=1,\dots,r$. In the case $\lie{g} = \mathfrak{sl}_2$, there is one extra field in weight $4$, so that $V^k(\lie{g})^{\mathbb{Z}_{2}}$ is of type $\mathcal{W}(1,2^{3},3,4^{2})$ for generic values of $k$.

The proof of this result can be done by using a deformation argument \cite{LII,CL} in the sense that, $$\lim_{k\rightarrow \infty}  V^k(\lie{g})^{\mathbb{Z}_2} \cong \mathcal{H}(m) \otimes \big(\mathcal{H}(d)^{\mathbb{Z}_2}\big).$$ Here $\mathcal{H}(k)$ denotes the rank $k$ Heisenberg vertex algebra, and $\mathbb{Z}_2$ acts on the generators by multiplication by $-1$. Moreover, the limiting structure has a minimal strong generating set of the same type as $V^k(\lie{g})^{\mathbb{Z}_2}$ for generic values of $k$. So the problem of finding a minimal strong generating set for $V^k(\lie{g})^{\mathbb{Z}_2}$ is reduced to finding the minimal strong generating set for $\mathcal{H}(d)^{\mathbb{Z}_2}$ for all $d$. In the case $d = 1$, $\mathcal{H}(1)^{\mathbb{Z}_2}$ is of type $\mathcal{W}(2,4)$ by a celebrated theorem of Dong and Nagatomo \cite{DNI}. We will show that for $d = 2$, $\mathcal{H}(2)^{\mathbb{Z}_2}$ is of type $\mathcal{W}(2^3, 3, 4^2)$ and for $d \geq 3$, $\mathcal{H}(d)^{\mathbb{Z}_2}$ is of type $\mathcal{W}(2^{d+ \binom{d}{2}},3^{ \binom{d}{2}},4)$. To the best of our knowledge, the minimal strong generating set for $\mathcal{H}(d)^{\mathbb{Z}_2}$ does not appear previously in the literature. However, in \cite{DNII}, the representation theory $\mathcal{H}(d)^{\mathbb{Z}_2}$ was studied and the irreducible, positive-energy modules of $\mathcal{H}(d)^{\mathbb{Z}_2}$ were classified.
  
Finally, in the case $\lie{g}=\mathfrak{sl}_2$ we determine the set of nongeneric values; it consists only of $\{0,\frac{16}{51},\frac{16}{9},-\frac{32}{3}\}$. It follows that for all other values of $k$, the strong generating set for $V^k(\mathfrak{sl}_2)^{\mathbb{Z}_2}$ will descend to a strong generating set for the simple orbifold $L_k(\mathfrak{sl}_2)^{\mathbb{Z}_2}$. Here $L_k(\mathfrak{sl}_2)$ denotes the simple quotient of $V^k(\mathfrak{sl}_2)$.

\section{Preliminaries}

\textbf{Vertex algebras.} The notion of vertex algebra was introduced by Borcherds \cite{B} back in the eighties. Since then, it has been in a remarkable progress of study (see for example \cite{B,FLM,K,FBZ}). We will use the formalism developed in \cite{LZ} and \cite{LiI}. Roughly speaking, a \textit{vertex algebra} is a quantum operator algebra $\mathcal{A}$ in which any two elements $a, b$ are local. By \textit{local}, we mean that there exists some positive integer $N$ such that $(z - w)^{N} [a(z), b(w)] = 0.$ Here $\mathcal{A}$ is assumed to be a $\mathbb{Z}_{2}$-graded, and $[a(z),b(w)]$ is the super bracket, that is $[a(z),b(w)]=a(z)b(w)-(-1)^{|a||b|}b(w)a(z).$ There are many definitions for vertex algebra, and this is an equivalent definition in \cite{FLM}. Each $a \in \mathcal{A}$ has a unique representation by the following formal distribution:
$$a=a(z):=\sum_{n \in \mathbb{Z}} a(n) z^{-n-1} \in \textit{End} (V)[[z,z^{-1}]] .$$
The normally ordered product of fields in a vertex algebra $\mathcal{A}$ is called the Wick product, and is defined by
$$:a(z)b(w):\ =a(z)_{-} b(z) +(-1)^{|a||b|}b(z)a(z)_{+},$$ 
where $a(z),b(w) \in \mathcal{A},$ and $$a(z)_{-} =\sum _{n<0}a(n)z^{-n-1} \enspace \enspace \enspace a(z)_{+} = \sum _{n \geq  0} a(n) z^{-n-1}.$$
The $k$-fold iterated Wick product is defined inductively as follows:
$$:a_{1}(z)\cdots a_{n}(z):\ =\ :a_{1}(z)(:a_{2}(z)\cdots a_{n}(z):):,$$
for $a_{1}(z),\dots,a_{n}(z) \in \mathcal{A}.$ 
The operators product expansion (OPE) formula for $a,b \in \mathcal{A}$ is defined by
\begin{equation}
a(z)b(w) \sim \sum _{n \geq  0}a(w) \circ _{n} b(w)( z-w)^{-n-1},
\end{equation}
where $\sim$ means equal modulo terms that are regular at $z=w.$ Here $\circ_{n}$ denotes the $n^{th}$ circle product, which is defined by
$$a(w) \circ _{n} b(w)= Res_{z}a(z)b(w)\imath _{|z|>|w|}(z-w)^{n}-(-1)^{|a||b|}Res_{z}b(w)a(z)\imath _{|w|>|z|}(z-w)^{n},$$
where $\imath _{|z|>|w|} f(z,w) \in \mathbb{C}[[z,z^{-1},w,w^{-1}]]$ denotes the power series expansion of a rational function $f$ which converges in the domain $|z|>|w|,$ while $Res_{z}a(z)$ denotes the coefficient of $z^{-1}.$ The $\partial_{z} a(z)$ is the formal derivative $\partial_{z}=\frac{d}{dz}.$\newline 

 A subset $S=\{a_{i}|i \in I\} \subset \mathcal{A}$ is said to \textit{generate} $\mathcal{A}$ if every $a \in \mathcal{A}$ is a linear combination of words in $a_{i},\circ_{n}$ for $i \in I$ and $n\in \mathbb{Z}.$ Moreover, we say that $S$  \textit {strongly generates} $\mathcal{A}$ if every $a \in \mathcal{A}$ is a linear combination of words in $a_{i},\circ_{n}$ for $n<0.$ Equivalently, $\mathcal{A}$ is spanned by the ordered monomials 
 \begin{equation}\label{monomial}
\{:\partial^{k_{1}} a_{i_{1}} \cdots \partial^{k_{m}} a_{i_{m}}:\ |i_{1},\dots ,i_{m}\in I, \enspace 0\leq k_{1}\leq \cdots \leq k_{m}\}.
\end{equation}
If $I$ can be chosen to be finite, then $\mathcal{A}$ is called \textit {strongly finitely generated}. \newline
\hphantom{X} We say that $S$ \textit{ freely generates} $\mathcal{ A}$ if there are no nontrivial normally ordered polynomial relations among the generators and their derivatives. \newline

Our work hinges mainly on the \textit{universal affine vertex algebras}. Let $\mathfrak{g}$ be a finite-dimensional Lie algebra over $\mathbb{C},$ furnished with a nondegenerate, symmetric, invariant bilinear form $B$. The \textit{affine Kac-Moody algebra} $\hat{\lie{g}}=\lie{g}[t,t^{-1}] \oplus \mathbb{C}_{\kappa},$ determined by $B,$ is the one-dimensional central extension of the loop algebra $\lie{g}[t,t^{-1}]=\lie{g}\otimes \mathbb{C}[t,t^{-1}],$ where a generator ${\kappa}$ is the central charge. The Lie algebra $\hat{\lie{g}}$ is spanned by $\langle\zeta  \otimes t^{n},\zeta  \in \lie{g},n \in \mathbb{Z},\kappa \rangle,$ and these generators satisfy the following Lie bracket:
\begin{equation}\label{general lie algebra}
[\zeta \otimes t^{n},\eta  \otimes t^{m}]=[\zeta,\eta]  \otimes t^{n+m}+nB(\zeta, \eta)\delta_{n+m,0}\kappa,\enspace\enspace\enspace [\kappa,\zeta  \otimes t^{n}]=0,
\end{equation}
and $\mathbb{Z}$-gradation $deg(\zeta \otimes t^{n})=n,$ and $deg(\kappa)=0.$
\newline
\hphantom{X} Let $\hat{\lie{g}}_{+}=\sum_{n\geq 0}\hat{\lie{g}}_{n}$ where $\hat{\lie{g}}_{n}$ denotes the subspace of degree n. For $k \in \mathbb{C},$ let $C_{k }$ be the one-dimensional $\hat{\lie{g}}_{+}$-module on which $\zeta \otimes t^n$ acts trivially for $n \geq 0,$ and $\kappa$ as a multiplication by scalar $k.$ Define $V_{k} = U(\hat{\lie{g}}) \otimes _{U(\hat{\lie{g}}_{+})} C_{k},$ and let $X^{\zeta}(n) \in End(V_{k})$ be the linear operator representing $\zeta \otimes t^{n}$ on $V_{k}.$ Define $X^{\zeta}(z)=\sum_{n \in \mathbb{Z}}X^{\zeta}(n)z^{-n-1}$ to be an even generating field of conformal weight $1,$ and satisfies the OPE relation
 \begin{equation}\label{ope affine universal}
 X^{\zeta}(z)X^{\eta}(w)\sim kB(\zeta,\eta)(z-w)^{-2}+X^{[\zeta,\eta]}(w)(z-w)^{-1}.
\end{equation}

 The vertex algebra $V^{k}(\lie{g},B)$ generated by $\{X^{\zeta_{i}}|\zeta_{i} \in \lie{g}\}$ is called the universal affine vertex algebra associated to $\mathfrak{g}$ and $B$ at level $k$. It has a PBW basis
\begin{equation}\label{basis g universal}
:\partial ^{k_{1}^{1}} X^{\zeta_{1}}\dots \partial ^{k_{s_{1}}^{1}} X^{\zeta_{1}} \dots \partial ^{k_{1}^{m}} X^{\zeta_{m}} \dots \partial^{k_{s_{m}}^{m}} X^{\zeta_{m}} :, \enspace \enspace s_{i}\geq0,  \enspace k_{1}^{i} \geq \dots \geq k_{s_{i}}^{i} \geq 0. 
\end{equation}

A special case is when $\lie{g}$ is a simple Lie algebra, the bilinear form $B$ is then defined to be the normalized Killing form.
Then \eqref{general lie algebra} can be defined in this case as follows:
$$[\zeta \otimes t^{n},\eta  \otimes t^{m}]=[\zeta,\eta]  \otimes t^{n+m}+n(\zeta|\eta)\delta_{n+m,0}\kappa,\enspace\enspace\enspace [\kappa,\zeta  \otimes t^{n}]=0,$$
for $\zeta, \eta \in \lie{g},n,m \in \mathbb{Z}.$ Here $(.|.)$ is \textit{the normalized Killing form}, and is defined as 
\begin{equation*}
(.|.)=\frac{1}{2h^{\vee}}(.,.)_{\kappa_{\lie{g}}},
\end{equation*}
where $h^{\vee}$ is the dual Coxeter number of $\lie{g}.$ In this case, we denote $V^{k}(\mathfrak{g},B)$ by $V^{k}(\mathfrak{g}).$

According to the Cartan-Killing classification, we have the following list for the simple, finite-dimensinal Lie algebras over $\mathbb{C}$.

\noindent \textbf{Classical Lie Algebras:}
\begin{enumerate}
\item $\mathfrak{sl}_{n+1},n\geq 1,$ and it has Cartan notation $A_{n},$
\item $\mathfrak{so}_{2n+1},n\geq 2,$ and it has Cartan notation $B_{n},$
\item $\mathfrak{sp}_{2n},n\geq 3,$ and it has Cartan notation $C_{n},$
\item $\mathfrak{so}_{2n},n\geq 4,$ and it has Cartan notation $D_{n}.$
\end{enumerate}
\textbf{Exceptional Lie Algebras:}
\begin{enumerate}
\item $G_{2},$
\item $F_{4},$
\item $E_{6},E_{7},$ or $E_{8}.$
\end{enumerate}

Let $\{\zeta_{1},\dots ,\zeta_{n}\}$ be an orthonormal basis for $\lie{g}$ relative to $(.|.).$ There is a natural conformal structure of central charge $\frac{k \cdot \text{dim}(\lie{g})}{k+h^{\vee}}$ on $V^{k}(\mathfrak{g})$ with the Virasoro element $L(z),$ that is
\begin{equation}
L(z)=\frac{1}{2(k+h^{\vee})}\sum_{i=1}^{n}:X^{\zeta_{i}}(z)X^{\zeta_{i}}(z):,
\end{equation}
where $k\neq -h^{\vee}.$ In this case, the Virasoro element is called the \textit{Sugawara conformal vector.} For $k= -h^{\vee},$  the Virasoro element $L(z)$ does not exist.

For the case where $\lie{g}$ is an abelian Lie algebra. Since $B$ is nondegenerate, $V^{k}(\mathfrak{g},B)$ is just the rank $n$ Heisenberg vertex algebra $\mathcal{H}(n).$  If we choose an orthonormal basis $\{\zeta_{1},\dots,\zeta_{n}\}$ for $\lie{g},$ then $\mathcal{H}(n)$ is generated by $\{\alpha^{i}= X^{\zeta_{i}}|i=1,\dots,n\}.$ 
\newline

\textbf{ Good increasing filtrations.} \cite{LiII} A good increasing filtration on a vertex algebra $\mathcal{A}$ is a $\mathbb{Z}_{\geq0}$-filtration
\begin{equation}\label{eq11}
\mathcal{A}_{(0)} \subset \mathcal{A}_{(1)}\subset \mathcal{A}_{(2)}\cdots ,\enspace \enspace\enspace \mathcal{A}=\bigcup _{d\geq 0}\mathcal{A}_{(d)}
\end{equation}
satisfying that $\mathcal{A}_{(0)}= \mathbb{C}$, and for all $a \in \mathcal{ A}_{(r)}, b  \in \mathcal{ A}_{(s)}$ we have
\begin{equation}
a \circ_{n} b \in \mathcal{ A}_{(r+s)}, \enspace \enspace \text{for} \enspace n < 0,
\end{equation}
\begin{equation}
a \circ_{n} b \in \mathcal{ A}_{(r+s-1)}, \enspace \enspace  \text{for} \enspace n \geq 0.
\end{equation}
Let $\mathcal{A}_{-1}=\{0\}.$ An element $a(z)\in \mathcal{A}$ has at most degree $d$ if $a(z) \in  \mathcal{A}_{(d)}.$

The associated graded algebra $\text{gr}(\mathcal{A}) =\bigoplus_{d\geq0} \mathcal{A}_{(d)}/\mathcal{A}_{(d-1)}$ is a $\mathbb{Z}_{\geq0}$-graded associative, (super)commutative algebra with a unit 1 under a product induced by the Wick product on $\mathcal{A}.$ It has a derivation $\partial$ of degree zero.\newline
 For each $r \geq 1,$ we have the projections
 \begin{equation}
 \phi_{r} : \mathcal{A}_{(r)} \rightarrow \mathcal{A}_{(r)}/\mathcal{A}_{(r-1)} \subset \text{gr}(\mathcal{A}).
 \end{equation}
\hphantom{X} Let $\mathcal{ R}$ be the category of vertex algebras associated with a $\mathbb{Z}_{\geq0}$-filtration. For any vertex algebra $\mathcal{A} \in \mathcal{ R},$ the $\partial$-ring, namely $\text{gr}(\mathcal{A})$ 		is an abelian vertex algebra. The following reconstruction property is the key peculiarity of $\mathcal{R}$ \cite{LL}:
\begin{lemma}\label{8} Given a vertex algebra $\mathcal{A} \in \mathcal{R}.$ Consider the collection $\{a_{i}| i \in I\}$ that generates $\text{gr}(\mathcal{A})$ as a $\partial$-ring, where $a_{i}$ is homogenous of degree $d_{i}.$ Then $\mathcal{A}$ is strongly generated by the collection $\{a_{i}(z)| i \in I\},$ where $a_{i}(z) \in \mathcal{A}_{(d_{i})}$ such that $\phi_{d_{i}}(a_{i}(z))=a_{i}.$
\end{lemma}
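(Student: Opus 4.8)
The plan is to prove Lemma~\ref{8} by induction on the filtration degree, exploiting that by construction the product and the derivation on $\text{gr}(\mathcal{A})$ are \emph{induced} by the Wick product and by $\partial$ on $\mathcal{A}$. Write $\mathcal{A}'$ for the linear span of the ordered monomials \eqref{monomial} built from the fields $a_i(z)$, so that the assertion ``$\mathcal{A}$ is strongly generated by $\{a_i(z)\mid i\in I\}$'' means precisely $\mathcal{A}=\mathcal{A}'$. Since $\mathcal{A}=\bigcup_{d\geq 0}\mathcal{A}_{(d)}$, it suffices to show $\mathcal{A}_{(d)}\subseteq\mathcal{A}'$ for all $d$, which I would prove by induction on $d$.

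The base case $\mathcal{A}_{(0)}=\mathbb{C}$ is clear. For the inductive step, fix $a(z)\in\mathcal{A}_{(d)}$ and consider $\phi_d(a(z))\in\mathcal{A}_{(d)}/\mathcal{A}_{(d-1)}\subseteq\text{gr}(\mathcal{A})$, a homogeneous element of degree $d$. Because $\{a_i\}$ generates $\text{gr}(\mathcal{A})$ as a $\partial$-ring, and because $\partial$ has degree $0$ while $a_i$ has degree $d_i$, we may expand
\begin{equation*}
\phi_d(a(z))=\sum_\lambda c_\lambda\,\partial^{k^\lambda_1}a_{i^\lambda_1}\cdots\partial^{k^\lambda_{r_\lambda}}a_{i^\lambda_{r_\lambda}}
\end{equation*}
as a finite linear combination of monomials with $d_{i^\lambda_1}+\cdots+d_{i^\lambda_{r_\lambda}}=d$, the product on the right being the commutative one on $\text{gr}(\mathcal{A})$. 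Lifting each monomial to the corresponding iterated Wick product in $\mathcal{A}$, set
\begin{equation*}
\omega(z)=\sum_\lambda c_\lambda\,:\partial^{k^\lambda_1}a_{i^\lambda_1}(z)\cdots\partial^{k^\lambda_{r_\lambda}}a_{i^\lambda_{r_\lambda}}(z):\ \in\ \mathcal{A}'.
\end{equation*}

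The heart of the argument is the claim that $\omega(z)\in\mathcal{A}_{(d)}$ and $\phi_d(\omega(z))=\phi_d(a(z))$. For this one uses that $a_{i^\lambda_j}(z)\in\mathcal{A}_{(d_{i^\lambda_j})}$ by hypothesis, that $\partial$ preserves each $\mathcal{A}_{(r)}$ (indeed $\partial u=u\circ_{-2}1$ with $1\in\mathcal{A}_{(0)}$), and that the iterated $\circ_{-1}$ products add filtration degrees; hence each lifted monomial lies in $\mathcal{A}_{(d)}$. A short induction on the number of factors, invoking the defining identity $\phi_{r+s}(u\circ_{-1}v)=\phi_r(u)\,\phi_s(v)$ of the graded product together with $\phi_r(\partial u)=\partial\phi_r(u)$, then identifies the image of each lifted monomial under $\phi_d$ with the original monomial in $\text{gr}(\mathcal{A})$, giving $\phi_d(\omega(z))=\phi_d(a(z))$. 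Consequently $a(z)-\omega(z)\in\ker\phi_d=\mathcal{A}_{(d-1)}\subseteq\mathcal{A}'$ by the inductive hypothesis, and since $\omega(z)\in\mathcal{A}'$ we obtain $a(z)\in\mathcal{A}'$, closing the induction.

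I expect the only genuinely delicate point — as opposed to the routine inductive bookkeeping — to be the compatibility of this lifting procedure: that replacing a polynomial relation in $\text{gr}(\mathcal{A})$ by an iterated normally ordered product in $\mathcal{A}$ reproduces the correct top filtration piece. This is exactly what the axioms of a good increasing filtration are designed to guarantee: the $n\geq 0$ circle products drop the degree by one while $\circ_{-1}$ is additive, so the failure of associativity of the Wick product on $\mathcal{A}$ only generates corrections in strictly lower filtration degree, which are absorbed into $\mathcal{A}_{(d-1)}$ and disposed of by the induction.
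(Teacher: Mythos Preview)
Your argument is correct and is exactly the standard induction on filtration degree. Note, however, that the paper does not actually give its own proof of this lemma: it is quoted as the ``reconstruction property'' of the category $\mathcal{R}$ with a reference to \cite{LL}, so there is nothing in the paper to compare against beyond the citation. Your write-up is essentially the proof one finds there. One cosmetic point: since you defined $\mathcal{A}'$ via \emph{ordered} monomials \eqref{monomial}, you should remark that the monomials in $\text{gr}(\mathcal{A})$ may be taken ordered before lifting (the graded product being commutative), so that $\omega(z)$ is visibly in $\mathcal{A}'$; alternatively, appeal to the equivalence of the two formulations of strong generation stated just above \eqref{monomial}.
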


We define an increasing filtration on $V^{k}(\lie{g})$ for any simple Lie algebra $\lie{g}$ as follows:\newline
\begin{equation}\label{filg}
V^{k}(\lie{g})_{(0)} \subset V^{k}(\lie{g})_{(1)} \subset  \cdots, \enspace \enspace \enspace V^{k}(\lie{g})=\bigcup_{r\geq 0} V^{k}(\lie{g})_{(r)} ,
\end{equation}
where $V^{k}(\lie{g})_{(-1)}=\{0\},$ and $V^{k}(\lie{g})_{(r)}$ is spanned by the iterated Wick products of the generators $X^{\zeta_{i}}$ and their derivatives, such that at most $r$ of the generators and their derivatives appear.  \newline

So, $V^{k}(\lie{g})$ equipped with this filtration lies in the category $\mathcal{R}.$ The $\mathbb{Z}_{\geq 0}$-associated graded algebra $$\text{gr}(V^{k}(\lie{g}))=\bigoplus_{d\geq 0}V^{k}(\lie{g})_{(d)}/V^{k}(\lie{g})_{(d-1)}$$ is now an abelian vertex algebra freely generated by $X^{\zeta_{i}}.$ Then, $V^{k}(\lie{g})\cong \text{gr}( V^{k}(\lie{g}))$ as linear spaces, and as commutative algebras we have 
\begin{equation*}
\text{gr}(V^{k}(\lie{g}))\cong \mathbb{C}[ X^{\zeta_{i}},\partial X^{\zeta_{i}},\partial^{2}X^{\zeta_{i}},\dots ].
\end{equation*}

\section{The $\mathbb{Z}_{2}$-orbifold of $\mathcal{H}(n)$}

The rank $n$ Heisenberg vertex algebra $\mathcal{H}(n)$ is the tensor product of $n$ copies of rank $1$ Heisenberg vertex algebra $\mathcal{H}$ with even generating fields $\alpha^{1},\dots,\alpha^{n}.$ These satisfy the OPE relations
\begin{equation}\label{nheisenberg}
\alpha^{i}(z)\alpha^{j}(w)\sim \delta _{i,j} (z-w)^{-2}.
\end{equation}
 There is a conformal structure of central charge $n$ on $\mathcal{H}(n)$ with the Virasoro element $L(z)$
\begin{equation}
L(z)=\frac{1}{2}\sum_{i=1}^{n}:\alpha^{i}(z)\alpha^{i}(z):,
\end{equation}
where each $\alpha^{i}$ is primary of weight $1$.

$\mathcal{H}(n)$ is freely generated by $\alpha^{1},\dots,\alpha^{n}$ and has a PBW basis as follows
\begin{equation}\label{eq22}
:\partial ^{k_{1}^{1}} \alpha^{1}\dots \partial ^{k_{s_{1}}^{1}} \alpha^{1} \dots \partial ^{k_{1}^{n}} \alpha^{n} \dots \partial^{k_{s_{n}}^{n}} \alpha ^{n} :, \enspace \enspace s_{i}\geq0,  \enspace k_{1}^{i} \geq \dots \geq k_{s_{i}}^{i} \geq 0. 
\end{equation}

 \textbf{Filtrations on $\mathcal{H}(n)$.} Define an increasing filtration on $\mathcal{H}(n)$ as follows: 
\begin{equation}\label{filh}
\mathcal{H}(n)_{(0)} \subset \mathcal{H}(n)_{(1)} \subset \mathcal{H}(n)_{(2)} \subset \cdots ,\enspace \enspace \enspace \mathcal{H}(n)=\bigcup _{d\geq 0} \mathcal{H}(n)_{(d)} ,
\end{equation}
where $\mathcal{H}(n)_{(-1)}=\{0\},$ and $\mathcal{H}(n)_{(r)}$ is spanned by the iterated Wick products of the generators $\alpha^{i}$ and their derivatives such that at most $r$ of $\alpha^{i}$ and their derivatives appear.

From defining the OPE relation \eqref{nheisenberg}, this is a good increasing filtration, and so, $\mathcal{H}(n)$ equipped with such a good filtration lies in the category $\mathcal{R}.$ The OPE relation will be replaced with $\alpha^{i}(z)\alpha^{j}(w)\sim 0.$ The $\mathbb{Z}_{\geq 0}$-associated graded algebra $$\text{gr}(\mathcal{H}(n))=\bigoplus_{d\geq0} \mathcal{H}(n)_{(d)}/\mathcal{H}(n)_{(d-1)}$$ is now an abelian vertex algebra freely generated by $\alpha^{i}.$ The rank $n$ Heisenberg vertex algebra $\mathcal{H}(n)$ equipped with such filtrations lies in the category $\mathcal{R}.$ Then, $\mathcal{H}(n)\cong \text{gr}( \mathcal{H}(n))$ as linear spaces, and as commutative algebras, we have 
\begin{equation*}
\text{gr}(\mathcal{H}(n))\cong \mathbb{C}[ \partial^{a} \alpha^{i}|a\geq0,i=1,\dots,n].
\end{equation*}

The subgroup $\mathbb{Z}_{2}$ of the automorphism group of $\mathcal{H}(n)$ generated by the nontrivial involution $\theta$ which acts on the generators as follows:
\begin{equation}\label{action of Z2H(n)}
\theta(\alpha^{i})=  -\alpha^{i}.
\end{equation}
The OPE relations \eqref{nheisenberg} will be preserved by this action on $\mathcal{H}(n),$ that is  
 $$\alpha^{i} \circ_{m}\alpha^{j}=\theta(\alpha^{i})\circ_{m} \theta( \alpha^{j})$$ for all $m$ as well as the filtration \eqref{filh}, and induces an action of $\mathbb{Z}_{2}$ on $\text{gr}(\mathcal{H}(n)).$

Going back to $\mathcal{H}(n)^{\mathbb{Z}_{2}},$ it is also spanned by all normally ordered monomials of the form \eqref{eq22}, where the length $s_1  + \cdots + s_n$ is even. Since $\mathcal{H}(n)$ is freely generated by $\alpha^{i}$, these monomials form a basis for $\mathcal{H}(n)^{\mathbb{Z}_{2}}$, and the normal form is unique.

The filtration on $\mathcal{H}(n)^{\mathbb{Z}_{2}}$ is obtained from the filtration \eqref{filh} after restriction as follows: 
$$(\mathcal{H}(n)^{\mathbb{Z}_{2}})_{(0)} \subset (\mathcal{H}(n)^{\mathbb{Z}_{2}})_{(1)}\subset \cdots ,\enspace \enspace \enspace \mathcal{H}(n)^{\mathbb{Z}_{2}}=\bigcup _{d\geq 0} (\mathcal{H}(n)^{\mathbb{Z}_{2}})_{(d)},$$
 where $(\mathcal{H}(n)^{\mathbb{Z}_{2}})_{(r)}=\mathcal{H}(n)^{\mathbb{Z}_{2}} \cap \mathcal{H}(n)_{(r)}.$

 The action of $\mathbb{Z}_{2}$ on $\mathcal{H}(n)$ descends to an action on $\text{gr}(\mathcal{H}(n))$, and so we have a linear isomorphism $ \mathcal{H}(n)^{\mathbb{Z}_{2}}\cong \text{gr}(\mathcal{H}(n)^{\mathbb{Z}_{2} })$ as linear spaces. Similarly, $\mathbb{Z}_{2}$ acts on $\text{gr}(\mathcal{H}(n))\cong \mathbb{C}[\partial^{a} \alpha^{i}|a\geq 0,i=1,\dots ,n],$ and so we have a linear isomorphism 
 \begin{equation}\label{grZ2}
\text{gr}(\mathcal{H}(n)^{\mathbb{Z}_{2}}) \cong \text{gr}(\mathcal{H}(n))^{\mathbb{Z}_{2}} \cong \mathbb{C}[\partial^{a} \alpha^{i}|a\geq0,i=1,\dots ,n]^{\mathbb{Z}_{2}}
\end{equation}
 as commutative algebras. The weight and degree are preserved by \eqref{grZ2} where $wt(\partial^{a}\alpha^{i})=a+1.$

Recall, the $\text{gr}(\mathcal{H}(n))^{\mathbb{Z}_{2}}$ is a commutative algebra of even degree with a differential $\partial$ of degree zero, and it extends to $\text{gr}(\mathcal{H}(n))^{\mathbb{Z}_{2}}$ by the product rule, that is
\begin{align*}
\partial (\alpha_{a}^{i} \alpha_{b}^{j})  =\alpha_{a+1}^{i} \alpha_{b}^{j}+\alpha_{a}^{i} \alpha_{b+1}^{j}.
\end{align*}

Define 
 \begin{equation*}
 q^{i,i}_{a,b}=\alpha_{a}^{i} \alpha_{b}^{i},\enspace\enspace\enspace q^{i,j}_{a,b}=\alpha_{a}^{i} \alpha_{b}^{j},
 \end{equation*}
as generators for $\text{gr}(\mathcal{H}(n))^{\mathbb{Z}_{2}}.$ The action of $\partial $ on these generators is defined as follows:
\begin{equation}\label{action of derivation Hn}
 \partial(q^{i,i}_{a,b}) =q^{i,i}_{a+1,b}+ q^{i,i}_{a,b+1}, \enspace \enspace \enspace \partial (q_{a,b}^{i,j}) =q^{i,j}_{a+1,b}+ q^{i,j}_{a,b+1}.
\end{equation}
 The action of $\mathbb{Z}_{2}$ on the $\text{gr}(\mathcal{H}(n))$ which is given by $\theta(\alpha_{a}^{i} )=-\alpha_{a}^{i} $ guarantees that $\text{gr}(\mathcal{H}(n))^{\mathbb{Z}_{2}}$ is generated by the subset $\{q^{i,i}_{a,b},q^{i,j}_{a,b}| a,b\geq0,\enspace 1 \leq i, j\leq n \}.$ Since $q^{i,i}_{a,b}=q^{i,i}_{b,a},$ and $q^{i,j}_{a,b}=q^{j,i}_{b,a},$ so $\text{gr}(\mathcal{H}(n))^{\mathbb{Z}_{2}}$ is generated by the subset 
 \begin{equation}\label{generating set}
 \{q^{i,i}_{a,b}| 0 \leq a \leq b,\enspace i=1,\dots ,n\} \bigcup \{q^{i,j}_{a,b}|  0 \leq a , b,\enspace 1 \leq i < j \leq n\}.
 \end{equation}

 Among these generators, the ideal of relations is generated by 
\begin{equation} \label{ideal'} q^{i,j}_{r,s} q^{k,l}_{t,u} - q_{r,u}^{i,l} q_{s,t}^{j,k},\enspace  i,j,k,l=1,\dots,n, \enspace 0 \leq r,s,t,u.
\end{equation}
Under the projection 
\begin{equation*}
\phi_{2}:(\mathcal{H}(n)^{\mathbb{Z}_{2}})_{(2)}\rightarrow (\mathcal{H}(n)^{\mathbb{Z}_{2}})_{(2)}/(\mathcal{H}(n)^{\mathbb{Z}_{2}})_{(1)}\subset \text{gr}(\mathcal{H}(n)^{\mathbb{Z}_{2}}),
\end{equation*}
the generators $q^{i,i}_{a,b},q_{a,b}^{i,j}$ of $\text{gr}(\mathcal{H}(n))^{\mathbb{Z}_{2}}$ correspond to fields $ \omega ^{i,i}_{a,b}, \omega _{a,b}^{i,j},$ respectively defined by
\begin{equation}
\omega_{a,b}^{i,i}= \ :\partial^{a} \alpha^{i}(z) \partial^{b} \alpha^{i}(z):\ \in (\mathcal{H}(n)^{\mathbb{Z}_{2}})_{(2)},\enspace \enspace \enspace  0\leq a \leq b ,\enspace \enspace i = 1,\dots, n, 
\end{equation}
\begin{equation}
\omega^{i,j}_{a,b}= \ :\partial^{a} \alpha^{i}(z) \partial^{b} \alpha^{j}(z): \ \in (\mathcal{H}(n)^{\mathbb{Z}_{2}})_{(2)},\enspace \enspace \enspace a,b \geq 0, \enspace\enspace  1\leq i < j \leq n.
\end{equation}
The fields $ \omega _{a,b}^{i,i}, \omega _{a,b}^{i,j}$ satisfy $\phi_{2}(\omega _{a,b}^{i,i})=q^{i,i}_{a,b},$ $\phi_{2}(\omega _{a,b}^{i,j})=q_{a,b}^{i,j},$ respectively and have weight $a+b+2.$ Note that $ \sum_{i=1}^{n}\omega_{0,0}^{i,i}=2L,$ where $L$ is the Virasoro element. The subspace $(\mathcal{H}(n)^{\mathbb{Z}_{2}})_{(2)}$ has degree at most $2,$ and has a basis $\{1\} \cup \{ \omega^{i,i} _{a,b},\omega _{a,b}^{i,j}\}.$ Moreover, for $m \geq 0,$ the operators $\omega _{a,b}^{i,j} \circ_{m}$ preserve this vector space \cite{LI}. For $a,b,c \geq0,\enspace 0 \leq m \leq a+b+c+1,$ and $i < j,$ we have
\begin{equation} 
\omega _{a,b}^{i,j} \circ _{m} \partial^{c} \alpha ^{i}=(-1)^{a} \frac{(a+c+1)!}{(a+c+1-m)!} \partial ^{a+b+c+1-m} \alpha ^{j},
\end{equation} 
\begin{equation} 
\omega _{a,b}^{i,j} \circ _{m} \partial^{c} \alpha ^{j}=(-1)^{b} \frac{(b+c+1)!}{(b+c+1-m)!} \partial ^{a+b+c+1-m} \alpha ^{i}.
\end{equation} 
\begin{equation}
\omega _{a,b}^{i,i} \circ _{m} \partial^{c} \alpha ^{i}=\lambda _{a,b,c,m} \partial ^{a+b+c+1-m} \alpha ^{i},
\end{equation}
where
\begin{equation*}
\lambda _{a,b,c,m}=(-1)^{b} \frac{(b+c+1)!}{(b+c+1-m)!}+(-1)^{a}\frac{(a+c+1)!}{(a+c+1-m)!}.
\end{equation*}
It follows that for $m \leq a+b+c+1,$ and $i < j < k$ we have
\begin{align}
\omega _{a,b}^{i,j} \circ _{m} \omega _{c,d}^{i,j} &= (-1)^{a} \frac{(a+c+1)!}{(a+c+1-m)!} \omega _{a+b+c+1-m,d}^{j,j}  \notag \\
&+(-1)^{b} \frac{(b+d+1)!}{(b+d+1-m)!}\omega _{a+b+d+1-m,c}^{i,i} ,
\end{align}
\begin{equation}\label{wijjk}
\omega _{a,b}^{i,j} \circ _{m} \omega _{c,d}^{j,k} = (-1)^{b} \frac{(b+c+1)!}{(b+c+1-m)!}  \omega _{a+b+c+1-m,d}^{i,k} ,
\end{equation}
\begin{equation}\label{wiiij}
\omega _{a,b}^{i,i} \circ _{m} \omega _{c,d}^{i,j} = \lambda_{a,b,c,m} \omega _{a+b+c+1-m,d}^{i,j} ,
\end{equation}
\begin{equation}\label{wjjij}
\omega _{a,b}^{j,j} \circ _{m} \omega _{c,d}^{i,j} = \lambda_{a,b,d,m} \omega _{c,a+b+d+1-m}^{i,j},
\end{equation}
\begin{equation}\label{wiiii}
\omega _{a,b}^{i,i} \circ _{m} \omega ^{i,i}_{c,d}= \lambda _{a,b,c,m} \omega^{i,i} _{a+b+c+1-m,d}  +\lambda _{a,b,d,m}\omega^{i,i} _{c,a+b+d+1-m}.
\end{equation}

As a differential algebra with derivation $\partial$, some of the generators in the generating
set \eqref{generating set} for $\text{gr}(\mathcal{H}(n))^{\mathbb{Z}_{2}}$ can be eliminated due to \eqref{action of derivation Hn}. For $m\geq0,$ let $$A_{m}=\text{span}\{\omega _{a,b}^{i,i}|a+b=m\}$$ be the vector space which is homogenous of weight $m+2$. Using the relation $\partial \omega _{a,b}^{i,i}=\omega_{a+1,b}^{i,i}+\omega _{a,b+1}^{i,i}$, we see that $\text{dim}(A_{2m})=m+1=\text{dim}(A_{2m+1}),$ for $m \geq 0.$ Moreover, $\partial(A_{m}) \subset A_{m+1},$ and 
\begin{equation}
\text{dim}(A_{2m}/\partial(A_{2m-1}))=1, \enspace \enspace \enspace  \text{dim}(A_{2m+1}/\partial(A_{2m}))=0.
\end{equation}
Thus, $A_{2m}$ has a decomposition
\begin{equation}
A_{2m}=\partial(A_{2m-1}) \oplus \langle \omega_{0,2m}^{i,i} \rangle = \partial^{2}(A_{2m-2}) \oplus \langle \omega_{0,2m}^{i,i} \rangle,
\end{equation}
where $\langle \omega_{0,2m}^{i,i}\rangle$ is the linear span of $ \omega^{i,i}_{0,2m}.$ Similarly, $A_{2m+1}$ has a decomposition
\begin{equation}
A_{2m+1}=\partial^{2}(A_{2m-1}) \oplus \langle \partial \omega_{0,2m}^{i,i} \rangle = \partial^{3}(A_{2m-2}) \oplus \langle \partial \omega_{0,2m}^{i,i} \rangle.
\end{equation}
Therefore, 
\begin{equation*} 
\text{span} \{\omega^{i,i}_{a,b}|a+b=2m\}=\text{span}\{\partial ^{2k} \omega^{i,i}_{0,2m-2k}|0\leq k\leq m\}
\end{equation*} 
and 
\begin{equation*}
\text{span} \{\omega^{i,i}_{a,b}|a+b=2m+1\}=\text{span}\{\partial ^{2k+1} \omega^{i,i}_{0,2m-2k}|0\leq k\leq m\}
\end{equation*}
 are bases of $A_{2m}$ and $A_{2m+1},$ respectively and so for each $\omega_{a,b}^{i,i}\in A_{2m}$ and $\omega_{c,d}^{i,i} \in A_{2m+1}$ can be written uniquely in the form 
\begin{equation}\label{linear}
\omega_{a,b}^{i,i}=\sum_{k= 0}^{m}  \lambda_{k} \partial^{2k} \omega^{i,i}_{0,2m-2k}, \enspace \enspace 
\omega_{c,d}^{i,i}=\sum_{k= 0}^{m} \mu_{k} \partial^{2k+1} \omega^{i,i}_{0,2m-2k},
\end{equation}
for constants $\lambda_{k},\mu_{k}.$

Similarly, for $m\geq 0$, let $A'_{m}=\text{span}\{\omega _{a,b}^{i,j}|a+b=m\},$ and use the relation $\partial \omega _{a,b}^{i,j}=\omega_{a+1,b}^{i,j}+\omega _{a,b+1}^{i,j}.$ We have $\text{dim}(A'_{m})=m+1,$ for $m \geq0.$ Moreover, $\partial(A'_{m}) \subset A'_{m+1},$ and 
\begin{equation}
\text{dim}(A'_{m}/\partial(A'_{m-1}))=1.
\end{equation}
Hence, $A'_{m}$ has a decomposition
\begin{equation}
A'_{m}=\partial(A'_{m-1}) \oplus \langle \omega^{i,j}_{0,m}\rangle 
\end{equation}
where $\langle \omega^{i,j}_{0,m}\rangle$ is the linear span of $ \omega^{i,j}_{0,m}.$ Therefore, $$\text{span} \{\omega^{i,i}_{a,b}|a+b=m\}=\text{span}\{\partial ^{k} \omega^{i,j}_{0,m-k}|\\0\leq k\leq m\}$$ is a basis of $A'_{m}.$ It follows that for each $\omega_{r,m-r}^{i,j} \in A'_{m}$ can be written uniquely in the form 
\begin{equation}\label{linear combination of w}
\omega_{r,m-r}^{i,j}=\sum_{k= 0}^{r}  (-1)^{r+k}\binom {r}{k}\partial^{k} \omega^{i,j}_{0,m-k}, 
\end{equation}
where $r=0,\dots, m.$

The following lemma gives a strong generating set for $\mathcal{H}(n)^{\mathbb{Z}_{2}}$, which as we shall see is far from minimal.
\begin{lemma}\label{strong generators for H(n)}
$\mathcal{H}(n)^{\mathbb{Z}_{2}}$ is strongly generated as a vertex algebra by the subset
\begin{equation}\label{geneh}
 \{\omega_{0,2m}^{i,i}|m\geq0, \enspace and \enspace i=1,\dots, n\}\bigcup \{\omega^{i,j}_{0,m}|m\geq0, \enspace  and \enspace 1\leq i<j \leq n \}.
 \end{equation}

 \end{lemma}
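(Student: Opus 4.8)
The plan is to apply the reconstruction Lemma~\ref{8} to the filtered vertex algebra $\mathcal{H}(n)^{\mathbb{Z}_2}\in\mathcal{R}$. By \eqref{grZ2} we have the isomorphism of commutative differential algebras $\text{gr}(\mathcal{H}(n)^{\mathbb{Z}_2})\cong \mathbb{C}[\partial^a\alpha^i\mid a\ge 0,\ i=1,\dots,n]^{\mathbb{Z}_2}$, so by Lemma~\ref{8} it suffices to exhibit a set of $\partial$-ring generators of this invariant polynomial ring whose preimages under $\phi_2$ are the fields in \eqref{geneh}. The first step is therefore to show that the quadratics $q^{i,i}_{0,2m}$ (for $m\ge 0$, $i=1,\dots,n$) together with the mixed quadratics $q^{i,j}_{0,m}$ (for $m\ge 0$, $1\le i<j\le n$) generate $\mathbb{C}[\partial^a\alpha^i]^{\mathbb{Z}_2}$ as a differential algebra. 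We already know from the discussion preceding the lemma — specifically \eqref{generating set} — that the full collection $\{q^{i,i}_{a,b}\mid 0\le a\le b\}\cup\{q^{i,j}_{a,b}\mid a,b\ge 0,\ i<j\}$ generates the invariant ring as a plain commutative algebra (this is the standard first fundamental theorem: a $\mathbb{Z}_2$-invariant polynomial in the $\partial^a\alpha^i$ is a polynomial in the pairwise products). So the only thing to check is that each $q^{i,i}_{a,b}$ and each $q^{i,j}_{a,b}$ lies in the differential subalgebra generated by the reduced set \eqref{geneh}.

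The second step is exactly that reduction, and it is already essentially carried out in the paragraphs immediately preceding the statement. The relations \eqref{action of derivation Hn}, $\partial q^{i,i}_{a,b}=q^{i,i}_{a+1,b}+q^{i,i}_{a,b+1}$ and $\partial q^{i,j}_{a,b}=q^{i,j}_{a+1,b}+q^{i,j}_{a,b+1}$, are the graded shadows of \eqref{linear} and \eqref{linear combination of w}: the dimension count on $A_m=\text{span}\{\omega^{i,i}_{a,b}\mid a+b=m\}$ (resp.\ $A'_m$) shows that every $\omega^{i,i}_{a,b}$ with $a+b=2m$ is a linear combination of $\partial^{2k}\omega^{i,i}_{0,2m-2k}$, every $\omega^{i,i}_{a,b}$ with $a+b=2m+1$ is a combination of $\partial^{2k+1}\omega^{i,i}_{0,2m-2k}$, and every $\omega^{i,j}_{r,m-r}$ equals $\sum_{k=0}^r(-1)^{r+k}\binom{r}{k}\partial^k\omega^{i,j}_{0,m-k}$. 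Reading these identities in $\text{gr}(\mathcal{H}(n)^{\mathbb{Z}_2})$ gives that $q^{i,i}_{a,b}$ and $q^{i,j}_{a,b}$ all lie in the $\partial$-subalgebra generated by $\{q^{i,i}_{0,2m}\}\cup\{q^{i,j}_{0,m}\}$; the odd-weight $q^{i,i}$-part needs the observation that $q^{i,i}_{a,b}$ with $a+b$ odd is already a $\partial$-derivative of a strictly-lower-weight even one, since $A_{2m+1}=\partial^2(A_{2m-1})\oplus\langle\partial\omega^{i,i}_{0,2m}\rangle$. Combining the two steps, the set \eqref{geneh} generates $\text{gr}(\mathcal{H}(n)^{\mathbb{Z}_2})$ as a $\partial$-ring, and Lemma~\ref{8} then promotes it to a strong generating set of $\mathcal{H}(n)^{\mathbb{Z}_2}$ itself, because $\phi_2(\omega^{i,i}_{0,2m})=q^{i,i}_{0,2m}$ and $\phi_2(\omega^{i,j}_{0,m})=q^{i,j}_{0,m}$ by construction.

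There is no serious obstacle here — the statement is a direct corollary of the structural setup already assembled — so the proof is short: invoke the first fundamental theorem for $\mathbb{Z}_2$ acting on a polynomial ring to get the unreduced generators, then use \eqref{linear}, \eqref{linear combination of w} and the $A_m$/$A'_m$ decompositions to pass to the reduced set, then apply Lemma~\ref{8}. The only point requiring a line of care is the bookkeeping for odd total weight in the diagonal case $q^{i,i}_{a,b}$, where one must note that such elements are themselves derivatives and so contribute no new $\partial$-ring generators; this is where the distinction $\text{dim}(A_{2m+1}/\partial(A_{2m}))=0$ (as opposed to $\text{dim}(A_{2m}/\partial(A_{2m-1}))=1$) does the work. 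The remark that this generating set is ``far from minimal'' is not part of the claim and will be addressed later; at this stage we only assert that \eqref{geneh} strongly generates.
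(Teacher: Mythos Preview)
Your proposal is correct and follows exactly the same approach as the paper: establish that $\{q^{i,i}_{0,2m}\}\cup\{q^{i,j}_{0,m}\}$ generates $\text{gr}(\mathcal{H}(n)^{\mathbb{Z}_2})$ as a $\partial$-ring (using the $A_m$/$A'_m$ reduction already carried out before the lemma), then invoke Lemma~\ref{8}. The paper's own proof is a two-line summary of precisely this argument, so your version is just a more fully unpacked rendition of the same idea.
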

\begin{proof} 
Since $\text{gr}(\mathcal{H}(n))^{\mathbb{Z}_{2}}=\text{gr}(\mathcal{H}(n)^{\mathbb{Z}_{2}})$ is generated by the subset $$\{q_{0,2m}^{i,i}|m\geq 0 ,\enspace i=1,\dots ,n\}  \\
\bigcup \{q_{0,m}^{i,j}|m\geq0, \enspace and \enspace 1\leq i<j \leq n\}$$ as a $\partial$-ring, Lemma \ref{8} shows that the corresponding set strongly generates $\mathcal{H}(n)^{\mathbb{Z}_{2}}$ as a vertex algebra.
\end{proof}

\section{Minimal strong generating set for $\mathcal{H}(n)^{\mathbb{Z}_2}$}\label{decoupling hEIENBERG relations}
In this section, we give a minimal strong generating set for $\mathcal{H}(n)^{\mathbb{Z}_2}$. First, we recall the case $n=1$, which is due to Dong and Nagatomo \cite{DNI}. For simplicity of notation, we write $\omega_{a,b} = \omega^{1,1}_{a,b}$ and $q_{a,b} = q^{1,1}_{a,b}$ in this case, and we include the proof for the benefit of the reader.

\begin{thm} (Dong-Nagatomo) $\mathcal{H}(1)^{\mathbb{Z}_2}$ has a minimal strong generating set $ \{\omega_{0,0},\omega_{0,2}\}$ and is of type $\mathcal{W}(2,4).$
\end{thm}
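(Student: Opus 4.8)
The plan is to establish the Dong--Nagatomo theorem by a decoupling argument at the level of the associated graded algebra, combined with Lemma \ref{8}. By Lemma \ref{strong generators for H(n)} (specialized to $n=1$), $\mathcal{H}(1)^{\mathbb{Z}_2}$ is strongly generated by $\{\omega_{0,2m}\mid m\geq 0\}$, so it suffices to show that for every $m\geq 2$ the field $\omega_{0,2m}$ can be expressed as a normally ordered polynomial in $\omega_{0,0},\omega_{0,2}$ and their derivatives (modulo lower terms in the filtration, which then propagates by induction on weight). Equivalently, I would work in $\gr(\mathcal{H}(1))^{\mathbb{Z}_2}\cong \C[\partial^a\alpha\mid a\geq 0]^{\mathbb{Z}_2}$, where the generators become $q_{a,b}=\alpha_a\alpha_b$, and show that this $\partial$-ring is generated by $q_{0,0}$ and $q_{0,2}$ alone.

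The key computation is the relation coming from \eqref{ideal'} in the case $n=1$: $q_{r,s}\,q_{t,u}=q_{r,u}\,q_{s,t}$ for all $r,s,t,u\geq 0$. Concretely, I would first record that $q_{a,b}q_{c,d}=q_{a,c}q_{b,d}=q_{a,d}q_{b,c}$, so that any product $q_{a,b}q_{c,d}$ depends only on the multiset $\{a,b,c,d\}$. Next, applying $\partial$ repeatedly to $q_{0,0}$ and $q_{0,2}$ and using \eqref{action of derivation Hn}, together with the decomposition \eqref{linear} that writes every $\omega_{a,b}$ (hence every $q_{a,b}$) in terms of $\partial^{2k}\omega_{0,2m-2k}$, I would set up an induction: assuming $q_{0,0},q_{0,2},\dots,q_{0,2m-2}$ all lie in the subring generated by $q_{0,0}$ and $q_{0,2}$ under $\partial$, I produce $q_{0,2m}$. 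The trick is to expand a suitable product such as $q_{0,0}\,\partial^{2m}q_{0,0}$ or $q_{0,2}\,\partial^{2m-2}q_{0,2}$ using the product rule, collect the terms $q_{a,b}$ with $a+b=2m$, apply the quadratic relation $q_{a,b}q_{c,d}=q_{a+c,0}q_{b+d,0}\cdot(\text{stuff})$ to rewrite everything, and solve the resulting linear system for $q_{0,2m}$. A weight count (the space $A_{2m}$ in weight $2m+2$ has dimension $m+1$, and $\partial^2 A_{2m-2}$ has codimension one) shows exactly one new generator appears in each even weight, and the relation supplies precisely one equation identifying it; the odd-weight generators $\omega_{0,2m+1}$ contribute nothing new since $\dim(A_{2m+1}/\partial A_{2m})=0$.

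The main obstacle I anticipate is verifying that the linear system one obtains when solving for $q_{0,2m}$ is actually nonsingular --- i.e., that the coefficient of $q_{0,2m}$ in the chosen product expansion is nonzero. This is a combinatorial identity involving binomial coefficients (coming from \eqref{linear combination of w} and the Leibniz expansion of $\partial^N(\alpha\alpha)$), and one must check it does not degenerate for any $m$; this is where a careful but elementary computation is unavoidable. Once the decoupling is established in $\gr$, Lemma \ref{8} immediately gives that $\{\omega_{0,0},\omega_{0,2}\}$ strongly generates $\mathcal{H}(1)^{\mathbb{Z}_2}$.

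Finally, for minimality: one checks that neither generator can be dropped. Removing $\omega_{0,0}=2L$ is impossible since the weight-$2$ part of $\mathcal{H}(1)^{\mathbb{Z}_2}$ is one-dimensional and spanned by it, and it cannot be a normally ordered polynomial in higher-weight fields. Removing $\omega_{0,2}$ is impossible because the subalgebra strongly generated by $\omega_{0,0}=2L$ alone is the Virasoro vertex algebra, whose weight-$4$ space is spanned by $:\!LL\!:$ and $\partial^2 L$ and is thus two-dimensional, whereas the weight-$4$ space of $\mathcal{H}(1)^{\mathbb{Z}_2}$ is three-dimensional (spanned by $\omega_{0,4}$, $\omega_{1,3}$, $\omega_{2,2}$, equivalently $:\!LL\!:$, $\partial^2 L$, and $\omega_{0,2}$ modulo lower filtration); hence $\omega_{0,2}$ is genuinely needed. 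This gives the type $\mathcal{W}(2,4)$ and completes the proof.
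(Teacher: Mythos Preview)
Your plan has a genuine gap: the decoupling you want \emph{cannot} be carried out in $\gr(\mathcal{H}(1))^{\mathbb{Z}_2}$, and so Lemma \ref{8} does not apply in the way you hope. The quadratic relations $q_{a,b}q_{c,d}=q_{a,d}q_{b,c}$ are identities among elements of polynomial degree~$4$ in the $\alpha_a$'s; no manipulation of them can ever ``solve for'' a degree-$2$ element such as $q_{0,2m}$. Concretely, the degree-$2$ component of the $\partial$-subring generated by $q_{0,0}$ and $q_{0,2}$ consists only of linear combinations of $\partial^k q_{0,0}$ and $\partial^k q_{0,2}$, and in weight $2m+2$ this is the $2$-dimensional span $\langle \partial^{2m}q_{0,0},\ \partial^{2m-2}q_{0,2}\rangle$ inside the $(m+1)$-dimensional space $A_{2m}$. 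Thus for $m\ge 2$ the element $q_{0,2m}$ does \emph{not} lie in the $\partial$-ring generated by $q_{0,0},q_{0,2}$, and Lemma \ref{8} gives nothing.

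What makes the argument go through in the paper is precisely the phenomenon you dismiss with ``modulo lower terms in the filtration'': one must work in $\mathcal{H}(1)^{\mathbb{Z}_2}$ itself, not in $\gr$. The lift $:\omega_{0,0}\omega_{1,1}:-:\omega_{0,1}\omega_{0,1}:$ of the relation $q_{0,0}q_{1,1}-q_{0,1}^2=0$ is not zero; it lies in filtration degree $\le 2$ and equals $-\tfrac{5}{4}\omega_{0,4}+\tfrac{7}{4}\partial^2\omega_{0,2}-\tfrac{7}{24}\partial^4\omega_{0,0}$. The nonvanishing coefficient $-\tfrac{5}{4}$ is the entire content of the decoupling for $\omega_{0,4}$, and it is invisible in $\gr$. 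The higher decouplings are then obtained not by further commutative-algebra relations but by applying the vertex-algebra operator $\omega_{0,2}\circ_1$, using $\omega_{0,2}\circ_1\omega_{0,2k}=(8+4k)\,\omega_{0,2k+2}+\partial^2(\cdots)$; this, too, is a genuinely noncommutative step with no analogue in $\gr$. Your minimality argument is fine (indeed more detailed than necessary), but the strong-generation half needs to be rewritten to take place in the vertex algebra rather than its associated graded.
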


\begin{proof} 
Among the generators $\{q_{0,2m}|\ m\geq0\}$ of $\text{gr}(\mathcal{H}(1))^{\mathbb{Z}_{2}}$, the first relation of the form \eqref{ideal'} occurs of minimal weight $6$, and has the form
\begin{equation}\label{deal'1}
q_{0,0} q _{1,1} -q_{0,1} q_{0,1}=0.
\end{equation}
This relation is unique up to scalar. The corresponding element $: \omega_{0,0} \omega_{1,1} :-: \omega_{0,1}\omega _{0,1} :$ lies in $(\mathcal{H}(n)^{\mathbb{Z}_{2}})_{(2)}$. This element does not vanish, but it has a correction of the form
\begin{equation}\label {H1 rank}
: \omega_{0,0} \omega _{1,1} :-: \omega_{0,1}\omega _{0,1}: \ =-\frac{5}{4}  \omega _{0,4}+\frac{7}{4} \partial ^{2} \omega_{0,2}-\frac{7}{24} \partial ^{4} \omega_{0,0} .
 \end{equation}
Furthermore, we have
 \begin{equation}\label{w0111}
 \omega_{0,1}=\frac{1}{2} \partial  \omega_{0,0},\qquad \omega_{1,1}= - \omega_{0,2}+\frac {1}{2} \partial^{2}  \omega_{0,0}.
 \end{equation}
 Thus, \eqref{H1 rank} can be rewritten in the form
 \begin{equation}\label{PH14}
 \omega_{0,4}=P_4(\omega_{0,0},\omega_{0,2}),
 \end{equation}
where $P_4(\omega_{0,0},\omega_{0,2})$ is a normally ordered polynomial in $\omega_{0,0}, \omega_{0,2}$, and their derivatives. This is called a \textit{decoupling relation}, as $\omega _{0,4}$ can then be expressed as a normally ordered polynomial in $\omega_{0,0},\omega_{0,2}$ and their derivatives. 

Next, we can construct decoupling relations
\begin{equation} \label{n=1:higherdecoup} \omega_{0,2m} = P_{2m}(\omega_{0,0}, \omega_{0,2}),\end{equation} expressing $\omega_{0,2m}$ as a normally ordered polynomial in $\omega_{0,0}, \omega_{0,2}$ and their derivatives, for all $m>2$. We need the calculation
\begin{equation}
\omega_{0,2} \circ_{1} \omega_{0,2k}=(8+4k) \omega_{0,2k+2} +\partial^{2} \mu,
\end{equation}
where $\mu$ is a linear combination of $\partial ^{2r}\omega_{0,2k-2r}$ for $r=0,\dots ,k$. We can then construct the relations \eqref{n=1:higherdecoup} inductively by applying the operator $\omega_{0,2}  \circ_1$ repeatedly to \eqref{PH14}.

It follows that $\mathcal{H}(1)^{\mathbb{Z}_2}$ is strongly generated by $\{\omega_{0,0}, \omega_{0,2}\}$. To see that this is a {\it minimal} strong generating set, it suffices to observe that no decoupling relations for $\omega_{0,0},\omega_{0,2}$ can be found since there are no relations of weight less than $6$ in $\text{gr}(\mathcal{H}(1))^{\mathbb{Z}_{2}}$ of the form \eqref{ideal'}.
\end{proof}

The main result in this section is
\begin{thm} \label{strong generators for Hn}
\begin{enumerate}
\item For $n=2,$ $\mathcal{H}{(2)}^{\mathbb{Z}_{2}}$ has a minimal strong generating set
\begin{equation}\label{strong generating set h2} 
 \{\omega^{1,1}_{0,0},\omega^{1,2}_{0,0}, \omega^{1,2}_{0,1},\omega^{1,2}_{0,2},\omega^{2,2}_{0,0},\omega^{2,2}_{0,2},\},
\end{equation}
and is of type $\mathcal{W}(2^{3},3,4^{2})$.
\item For $n\geq3,$ $\mathcal{H}{(n)}^{\mathbb{Z}_{2}}$ has a minimal strong generating set
\begin{equation}\label{strong generating set hn}
 \{\omega_{0,0}^{i,i}|\ i=1,\dots ,n\} \bigcup  \{\omega^{i,j}_{0,0}, \omega^{i,j}_{0,1}|\ 1\leq i< j\leq n\}  \bigcup \{\omega_{0,2}^{1,1}\},
 \end{equation}
 and is of type $\mathcal{W}(2^{n+\binom{n}{2}},3^{\binom{n}{2}},4).$
 \end{enumerate}
 \end{thm}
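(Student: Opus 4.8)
The plan is to follow the template established by the Dong--Nagatomo argument for $\mathcal{H}(1)^{\mathbb{Z}_2}$, but to work with the full set of generators $\{\omega^{i,i}_{0,2m}\}$ and $\{\omega^{i,j}_{0,m}\}$ from Lemma \ref{strong generators for H(n)}, and to produce decoupling relations that successively eliminate all but the claimed finite set. By Lemma \ref{8}, it is enough to exhibit, for each generator $q$ in the generating set \eqref{geneh} outside the proposed minimal set, a relation in $\text{gr}(\mathcal{H}(n))^{\mathbb{Z}_2}$ of the form \eqref{ideal'} whose lift to $(\mathcal{H}(n)^{\mathbb{Z}_2})_{(2)}$ expresses the corresponding field as a normally ordered polynomial in the proposed generators and their derivatives; the circle-product formulas \eqref{wijjk}--\eqref{wiiii} then let us bootstrap from a handful of low-weight decoupling relations to all higher weights.

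First I would treat the ``off-diagonal'' generators $\omega^{i,j}_{0,m}$ with $i<j$. The key observation is that a single decoupling relation of weight $4$, expressing $\omega^{i,j}_{0,2}$ in terms of $\omega^{i,j}_{0,0},\omega^{i,j}_{0,1}$ (and diagonal fields), can be propagated: applying $\omega^{i,j}_{0,1}\circ_1$ or a suitable diagonal operator $\omega^{i,i}_{0,0}\circ_m$ raises the second index by a controlled amount modulo total derivatives, exactly as $\omega_{0,2}\circ_1$ did in the rank-one case via \eqref{wijjk} and \eqref{wiiij}--\eqref{wjjij}. The needed weight-$4$ relation comes from a relation of the form \eqref{ideal'} among the $q$'s of weight $4$, for instance $q^{i,j}_{0,0}q^{i,j}_{1,1}-q^{i,j}_{0,1}q^{i,j}_{0,1}$ together with the corrections supplied by formula \eqref{wijji}-type computations; I would record its explicit lift. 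This disposes of $\omega^{i,j}_{0,m}$ for all $m\ge 2$.

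Next I would treat the diagonal generators $\omega^{i,i}_{0,2m}$ for $i\ge 2$ (and $m\ge 2$ when $i=1$). Here the trick is to use the \emph{mixed} fields: a relation of the shape $q^{1,i}_{r,s}q^{1,i}_{t,u}-q^{1,1}_{?,?}q^{i,i}_{?,?}$ (available by \eqref{ideal'}) lifts to a decoupling relation that expresses a diagonal field $\omega^{i,i}_{0,2m}$ in terms of $\omega^{1,1}_{*,*}$, $\omega^{1,i}_{*,*}$ and lower-weight diagonal fields; combined with the already-established decoupling of all $\omega^{1,i}_{0,m}$ with $m\ge2$ and of $\omega^{1,1}_{0,2m}$ with $m\ge2$ (the latter being exactly the Dong--Nagatomo step inside the $i=j=1$ Heisenberg sub-vertex-algebra), this eliminates every $\omega^{i,i}_{0,2m}$ except the weight-$2$ fields $\omega^{i,i}_{0,0}$ and the single weight-$4$ field $\omega^{1,1}_{0,2}$. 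The case $n=2$ is genuinely exceptional because there is no third index available to play the role of $\omega^{1,i}$ for $i\ge2$ simultaneously, so $\omega^{2,2}_{0,2}$ survives; I would verify by a direct weight-count in $\text{gr}(\mathcal{H}(2))^{\mathbb{Z}_2}$ that no relation of the form \eqref{ideal'} of weight $\le 4$ allows its elimination, giving the extra $4$ in type $\mathcal{W}(2^3,3,4^2)$, while for $n\ge3$ the extra mixed generators $\omega^{1,j}_{0,0}$ provide exactly the ingredients to decouple it.

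Finally, minimality: for each claimed generator I must show no decoupling relation expresses it in terms of the others. As in the rank-one proof, this reduces to checking that in $\text{gr}(\mathcal{H}(n))^{\mathbb{Z}_2}$ there is no relation of the form \eqref{ideal'} of low enough weight — concretely, the first relations occur in weight $4$ (forcing one generator per pair $\{i,j\}$ and the diagonal weight-$4$ fields to be the last survivors) and there are none in weight $2$ or $3$ except the obvious symmetries already quotiented out, so the weight-$2$ fields $\omega^{i,i}_{0,0}$, the weight-$2$ and weight-$3$ fields $\omega^{i,j}_{0,0},\omega^{i,j}_{0,1}$, and $\omega^{1,1}_{0,2}$ (plus $\omega^{2,2}_{0,2}$ when $n=2$) cannot be removed. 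I expect the main obstacle to be purely computational: producing the explicit weight-$4$ correction terms (the analogue of \eqref{H1 rank}) in the mixed cases and keeping track of which linear combinations of $\partial^{2k}\omega^{i,i}_{0,2m-2k}$ appear, since unlike the rank-one case several families interact; organizing these via the decompositions \eqref{linear}--\eqref{linear combination of w} and the operators $\omega^{i,j}_{a,b}\circ_m$ acting on the degree-$\le 2$ subspace should keep it manageable, with the $n=2$ versus $n\ge3$ dichotomy being the one genuinely delicate structural point.
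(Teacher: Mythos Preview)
Your overall architecture (use Lemma \ref{8}, lift relations \eqref{ideal'} to get decoupling relations in $(\mathcal{H}(n)^{\mathbb{Z}_2})_{(2)}$, then propagate via circle products) matches the paper, and your treatment of the diagonal generators is essentially the paper's: apply Dong--Nagatomo inside each rank-one factor to reduce to $\omega^{i,i}_{0,0},\omega^{i,i}_{0,2}$, then use the two-index weight-$4$ relation $q^{i,i}_{0,0}q^{j,j}_{0,0}-(q^{i,j}_{0,0})^2$, whose lift the paper records as
\[
:\omega^{1,j}_{0,0}\omega^{1,j}_{0,0}:\ =\tfrac{1}{2}\omega^{1,1}_{0,2}+\tfrac{1}{2}\omega^{j,j}_{0,2}+:\omega^{1,1}_{0,0}\omega^{j,j}_{0,0}:,
\]
to eliminate all but one diagonal weight-$4$ field.

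The genuine gap is in your off-diagonal step. You claim a weight-$4$ decoupling of $\omega^{i,j}_{0,2}$ using only the two indices $i,j$, citing ``for instance $q^{i,j}_{0,0}q^{i,j}_{1,1}-q^{i,j}_{0,1}q^{i,j}_{0,1}$''. That relation has weight $6$, not $4$. More to the point, with only two indices the \emph{only} relation of the form \eqref{ideal'} at weight $4$ is the one displayed above, and its degree-$2$ correction is purely diagonal: no $\omega^{i,j}_{0,2}$ appears. So there is no two-index mechanism that kills $\omega^{i,j}_{0,2}$. The paper's proof for $n\ge 3$ uses instead the \emph{three-index} weight-$4$ relation $q^{i,j}_{0,0}q^{j,k}_{0,0}-q^{i,k}_{0,0}q^{j,j}_{0,0}$, whose lift
\[
:\omega^{i,j}_{0,0}\omega^{j,k}_{0,0}:-:\omega^{i,k}_{0,0}\omega^{j,j}_{0,0}:\ =\tfrac{1}{2}\omega^{i,k}_{0,2}-\partial\omega^{i,k}_{0,1}+\tfrac{1}{2}\partial^2\omega^{i,k}_{0,0}
\]
does isolate $\omega^{i,k}_{0,2}$; one then propagates via $\omega^{k,k}_{0,2}\circ_1$. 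This is precisely why $n=2$ is special, and your diagnosis of the $n=2$ case is inverted: a single diagonal weight-$4$ field survives for \emph{every} $n$; the extra weight-$4$ generator at $n=2$ is the off-diagonal $\omega^{1,2}_{0,2}$, which cannot be decoupled because no third index is available. For $n=2$ the paper instead starts one step higher, using the weight-$5$ relation $q^{1,2}_{0,0}q^{2,2}_{0,1}-q^{1,2}_{0,1}q^{2,2}_{0,0}$ to decouple $\omega^{1,2}_{0,3}$, and then propagates. Once you replace your two-index off-diagonal step with these (the three-index relation for $n\ge 3$, the weight-$5$ relation for $n=2$), the rest of your outline goes through.
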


\begin{proof}
First, we consider the case $n = 2$. By replacing $\omega_{0,2m}$ with $\omega^{i,i}_{0,2m}$ in \eqref{PH14} and \eqref{n=1:higherdecoup} for $i = 1,2$, we obtain decoupling relations 
$$\omega^{i,i}_{0,2m} = P_{2m}(\omega^{i,i}_{0,0}, \omega^{i,i}_{0,2})$$ for $i = 1,2$ and all $m\geq 2$. Then by Lemma \ref{strong generators for H(n)}, $\mathcal{H}{(2)}^{\mathbb{Z}_{2}}$ has a strong generating set 
$$\{\omega^{1,1}_{0,0},\omega^{1,1}_{0,2},\omega^{2,2}_{0,0},\omega^{2,2}_{0,2}\} \bigcup \{\omega^{1,2}_{0,m},m\geq 0\}.$$ 

Next, we have the following relation at weight $5$ among the generators of $\text{gr}(\mathcal{H}(2)^{\mathbb{Z}_{2}})$
\begin{equation}\label{deal1}
q^{1,2}_{0,0} q ^{2,2}_{0,1} -q^{1,2}_{0,1} q^{2,2}_{0,0} =0.
\end{equation}
 The corresponding element $: \omega^{1,2}_{0,0}  \omega ^{2,2}_{0,1} :-: \omega^{1,2}_{0,1} \omega ^{2,2}_{0,0}  :$ in $(\mathcal{H}(2)^{\mathbb{Z}_{2}})_{(2)}$ has a correction of the form  
\begin{equation}\label{eq3}
: \omega^{1,2}_{0,0}  \omega ^{2,2}_{0,1} :-: \omega^{1,2}_{0,1} \omega ^{2,2}_{0,0}  : \ =-\frac{1}{2}  \omega ^{1,2}_{0,3}+{2} \partial \omega^{1,2}_{0,2}-\frac{5}{2} \partial ^{2} \omega^{1,2}_{0,1} + \partial ^{3} \omega^{1,2}_{0,0}.
\end{equation}
This can be rewritten as follows:
 \begin{equation}\label{eqQ4}
 \omega ^{1,2}_{0,3}= Q_3( \omega_{0,0}^{2,2},\omega^{1,2}_{0,0},\omega^{1,2}_{0,1},\omega^{1,2}_{0,2}),
 \end{equation} where $Q( \omega_{0,0}^{2,2},\omega^{1,2}_{0,0},\omega^{1,2}_{0,1},\omega^{1,2}_{0,2})$ is a normally ordered polynomial in $\omega^{2,2}_{0,0}, \omega^{1,2}_{0,0},\omega^{1,2}_{0,1},\omega^{1,2}_{0,2}$, and their derivatives
 
 Next, by applying the operator $\omega^{2,2}_{0,2}\circ_{1}$ repeatedly, we can get decoupling relations
 \begin{equation}\label{n=2:higherdecoup} \omega^{1,2}_{0,m} = Q_m(\omega_{0,0}^{2,2},\omega_{0,2}^{2,2},\omega^{1,2}_{0,0},\omega^{1,2}_{0,1},\omega^{1,2}_{0,2}),
 \end{equation} for all $m>3$. This follows from the calculation
\begin{equation}
\omega^{2,2}_{0,1}\circ_{1} \omega^{1,2}_{0,k}= -\omega^{1,2}_{0,k+1}.
\end{equation}
Therefore $\omega^{1,2}_{0,m}$ for $m \geq 3$ are not necessary. Finally, we have the relation
 \begin{equation} :\omega^{1,2}_{0,0}\omega^{1,2}_{0,0}:\ =\frac{1}{2}\omega^{1,1}_{0,2}+\frac{1}{2}\omega^{2,2}_{0,2}+:\omega^{1,1}_{0,0}\omega^{2,2}_{0,0}:. \end{equation}
This shows that $\omega^{2,2}_{0,2}$ is unnecessary, hence the set \eqref{strong generating set h2} suffices to strongly generate $\mathcal{H}(2)^{\mathbb{Z}_2}$. The fact that this set is a minimal strong generating set is clear since there are no relations of weight less than $5$ of the form \eqref{ideal'}.

Finally, we consider the case $n\geq 3$. As above, for $i = 1,\dots, n$ and $m \geq 2$ we have relations
 $$\omega^{i,i}_{0,2m} = P_m(\omega^{i,i}_{0,0}, \omega^{i,i}_{0,2}).$$ So $\omega^{i,i}_{0,2m}$ can be eliminated for all $m\geq 2$.
 
For $n\geq 3$, we can do a bit better than the relations \eqref{eqQ4} and \eqref{n=2:higherdecoup}, since we have the following relations at weight $4$ in $\text{gr}(\mathcal{H}(n)^{\mathbb{Z}_2})$ for all $i<j<k$,
 \begin{equation}\label{deal2}
q^{i,j}_{0,0} q ^{j,k}_{0,0} -q^{i,k}_{0,0} q^{j,j}_{0,0} =0.
\end{equation}
The corresponding element $: \omega_{0,0}^{i,j} \omega _{0,0}^{j,k} :-: \omega_{0,0}^{i,k} \omega _{0,0}^{j,j} :$ lies in $(\mathcal{H}(n)^{\mathbb{Z}_{2}})_{(2)},$ and has a correction of the form 
\begin{equation}\label{3copies} : \omega_{0,0}^{i,j} \omega _{0,0}^{j,k} :-: \omega_{0,0}^{i,k} \omega _{0,0}^{j,j} : \ =\frac{1}{2}\omega^{i,k}_{0,2}-\partial \omega^{i,k}_{0,1}+\frac{1}{2}\partial^{2}\omega^{i,k}_{0,0}.\end{equation}
 
We can clearly rewrite \eqref{3copies} in the form
 \begin{equation}\label{equationQ'4}
\omega^{i,k}_{0,2} = T_{2}( \omega_{0,0}^{j,j},\omega_{0,0}^{i,j} ,\omega _{0,0}^{j,k} ,\omega^{i,k}_{0,0},\omega^{i,k}_{0,1}),
 \end{equation}
 where $T_{2}( \omega_{0,0}^{j,j},\omega_{0,0}^{i,j} ,\omega _{0,0}^{j,k} ,\omega^{i,k}_{0,0},\omega^{i,k}_{0,1})$ is a normally ordered polynomial in $ \omega_{0,0}^{j,j},\omega_{0,0}^{i,j} ,\omega _{0,0}^{j,k} ,\omega^{i,k}_{0,0},\omega^{i,k}_{0,1}$, and their derivatives.
 
As above, by applying the operator $\omega^{k,k}_{0,2} \circ_1$ repeatedly, we can construct relations
\begin{equation} \omega^{i,k}_{0,m} = T_m(\omega_{0,0}^{j,j},\omega_{0,0}^{i,j} ,\omega _{0,0}^{j,k},\omega _{0,1}^{j,k} ,\omega^{i,k}_{0,0},\omega^{i,k}_{0,1})\end{equation} for all $m\geq 2$. This shows that $\omega^{i,k}_{0,m}$ can be eliminated for all $m\geq 2$.

 Finally, for all $j$ with $1<j \leq n$, we have the relation
\begin{equation} :\omega^{1,j}_{0,0}\omega^{1,j}_{0,0}:\ =\frac{1}{2}\omega^{1,1}_{0,2}+\frac{1}{2}\omega^{j,j}_{0,2}+:\omega^{1,1}_{0,0}\omega^{j,j}_{0,0}:. \end{equation}
 This shows that $\omega^{j,j}_{0,2}$ can be eliminated for $1< j\leq n$. It follows that \eqref{strong generating set hn} strongly generates $\mathcal{H}(n)^{\mathbb{Z}_2}$ for $n\geq 3$. The fact that it is a minimal strong generating set is again clear since there are no more relations of weight less than $5$ of the form \eqref{ideal'}. \end{proof}

Notice that $\omega_{0,0}^{i,i},\omega_{0,2}^{i,i},\omega^{i,j}_{0,0}, \omega^{i,j}_{0,1}, \omega^{i,j}_{0,2}$ are not primary fields with respect to the Virasoro field $$L(z)=\frac{1}{2}\sum_{i= 1}^{n} :a^{i}(z)a^{i}(z): = \sum_{i= 1}^{n} \omega^{i,i}_{0,0}.$$ It is easy to correct them to be a primary ones by adding a normally ordered polynomial in the previous set and their derivatives. By a computer calculation, we obtain the following primary fields:
\begin{equation}\label{10}
\begin{aligned}
C^{k}&=\frac{1}{2}(\omega^{1,1}_{0,0}-\omega^{k,k}_{0,0}),\enspace where\enspace k=2,\dots ,n.\\
C^{i,i}_{0,2}&=\omega^{i,i}_{0,2}-\frac{2}{9}:\omega^{i,i}_{0,0}\omega^{i,i}_{0,0}:-\frac{1}{6}\partial^{2}\omega^{i,i}_{0,0},\enspace where\enspace i = 1,\dots ,n,\\
C^{i,j}_{0,0}&=\omega^{i,j}_{0,0},\\
C^{i,j}_{0,1}&=\omega^{i,j}_{0,1}-\frac{1}{2}\partial \omega^{i,j}_{0,0},\\
C^{i,j}_{0,2}&=\omega^{i,j}_{0,2}-\frac{4}{9}:\omega^{i,j}_{0,0}\omega^{j,j}_{0,0}:+\frac{5}{9}\partial^{2}\omega^{i,j}_{0,0}-\frac{13}{9}\partial\omega^{i,j}_{0,1}.
\end{aligned}
\end{equation}

\section{The Cartan involution and its extension to $V^k(\lie{g})$}

Let us consider a simple Lie algebra $\lie{g}$ as above with $l = \text{rank}(\lie{g})$ and $m$ the number of positive roots. With respect to a choice of base for the root system $\Phi$, we have the triangular decomposition
$$\lie{g}=\lie{h}\oplus \lie{n}_{+} \oplus \lie{n}_{-},$$ where $\lie{h}$ is the Cartan subalgebra with basis $h_r$, $r = 1,\dots, l$, and $\lie{n}_+$ has basis $x_{\beta_i}$ for $i = 1,\dots, m$, and $\lie{n}_-$ has basis $y_{\beta_i}$ for $i = 1,\dots, m$. The Cartan involution $\theta$ of $\lie{g}$ is defined as follows:
 \begin{equation*}
 \theta(x_{\beta_{i}})=-y_{\beta_{i}}, \enspace \enspace \enspace  \theta(y_{\beta_{i}})=-x_{\beta_{i}}, \enspace \enspace \enspace \theta(h_{r})=-h_{r}.
 \end{equation*}

Since $\theta$ preserves the Lie bracket as well as the normalized Killing form, it extends to an automorphism of the vertex algebra $V^k(\lie{g})$ given by the same formula, where $h_r, x_{\beta_i}, y_{\beta_i}$ are now considered as the generating fields for $V^k(\lie{g})$.

To replace the generators $h_r, x_{\beta_i}, y_{\beta_i}$ of $V^k(\lie{g})$ with a set of eigenvectors for $\theta,$ it is suitable to apply a linear change of variables as follows:
 \begin{equation*}
 E_{\beta_{i}}= x_{\beta_{i}}+y_{\beta_{i}}, \enspace \enspace \enspace F_{\beta_{i}}=x_{\beta_{i}}-y_{\beta_{i}} \enspace \enspace \enspace h_{r}.
 \end{equation*}
 The action of $\theta$ on the new generators will be as follows:
 \begin{equation*}
 \theta(E_{\beta_{i}})=-E_{\beta_{i}},\enspace \enspace \enspace  \theta(F_{\beta_{i}})=F_{\beta_{i}}, \enspace \enspace \enspace \theta(h_{r})=-h_{r}.
 \end{equation*}
There is a PBW basis consisting of normally ordered monomials of the new generators and their derivatives since the new generators are related to the old ones by a linear change of variables.

Note that the fields $F_{\beta_i}$ lie in the orbifold $V^k(\lie{g})^{\mathbb{Z}_2}$. Define additional generators of $V^{k}(\lie{g})^{\mathbb{Z}_2}$ as follows: 
\begin{equation*}
\begin{aligned}
Q_{a,b}^{\beta_{i},\beta_{j}}&= \ :\partial^{a} E_{\beta_{i}}(z) \partial^{b} E_{\beta_{j}}(z):,\\ 
Q_{a,b}^{h_{r},\beta_{i}}&= \ : \partial^{a} h_{r}(z)\partial^{b} E_{\beta_{i}}(z):,\\ 
Q_{a,b}^{h_{r},h_{s}}&= \ :\partial^{a} h_{r}(z) \partial^{b} h_{s}(z):,
\end{aligned}
\end{equation*}
 which each have weight $a+b+2.$

A special case when $\lie{g} = \mathfrak{sl}_2$, we have only one positive root $\beta.$ An immediate consequence that there is one element $F_{\beta}$, one element $E_{\beta}$, and one basis vector $h$ for $\lie{h}$. The above elements can be given as follows: \begin{equation*}
\begin{aligned}
Q_{a,b}^{\beta,\beta}&= \ :\partial^{a} E_{\beta}(z) \partial^{b} E_{\beta}(z):,\\ 
Q_{a,b}^{h,\beta}&= \ : \partial^{a} h(z)\partial^{b} E_{\beta}(z):,\\ 
Q_{a,b}^{h,h}&= \ :\partial^{a} h(z) \partial^{b} h(z):.
\end{aligned}
\end{equation*}

\section{The $\mathbb{Z}_2$-orbifold of $V^k(\lie{g})$}

At this point, we are ready to state our main theorem which describes $V^{k}(\lie{g})^{\mathbb{Z}_{2}}$ for generic values of $k$.
\begin{thm}\label{ch5:mainresult}

Let $\lie{g}$ be a simple, finite-dimensional Lie algebra, and let $l = \text{rank}(\lie{g})$, $m$ the number of positive roots, and set $d = m+l$.
\begin{enumerate}

\item For $\lie{g} \neq \mathfrak{sl}_2$, and $k$ generic, $V^k(\lie{g})^{\mathbb{Z}_{2}}$ has a minimal strong generating set
\begin{equation*}
\{F_{\beta_{i}},Q_{0,0}^{\beta_{a},\beta_{b}},Q_{0,2}^{\beta_{1},\beta_{1}},Q_{0,0}^{h_{r},h_{s}},Q_{0,0}^{h_{t},\beta_{u}},Q_{0,1}^{h_{t},\beta_{u}}\} ,
\end{equation*}
for $1\leq i \leq m$, $1\leq a \leq b \leq m$, $1\leq r \leq s\leq l$, $1\leq t \leq l$, and $1\leq u \leq m$. In particular, $V^k(\lie{g})^{\mathbb{Z}_{2}}$ is of type $\mathcal{W}(1^{m},2^{d+ \binom{d}{2}},3^{ \binom{d}{2}},4)$.

\item For $\lie{g}=\mathfrak{sl}_{2}$ and $k$ generic, $V^{k}(\lie{g})^{\mathbb{Z}_{2}}$ has a minimal strong generating set
\begin{equation*}
\{F,Q^{\beta,\beta}_{0,0},Q^{h,h}_{0,0},Q^{h,h}_{0,2},Q^{\beta,h}_{0,0},Q^{\beta,h}_{0,1},Q^{\beta,h}_{0,2}\} ,
\end{equation*}
 and in particular, is of type $\mathcal{W}(1,2^{3},3,4^{2}).$
 \end{enumerate}
\end{thm}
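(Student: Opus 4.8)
The plan is to deduce both statements from Theorem \ref{strong generators for Hn} by the deformation argument of \cite{LII,CL}. First I would rescale the generators $E_{\beta_i},F_{\beta_i},h_r$ of $V^k(\lie{g})$ by $k^{-1/2}$ and pass to the limit $k\to\infty$. In the OPE \eqref{ope affine universal} this turns the coefficient of $(z-w)^{-2}$ into the constant $B(\zeta,\eta)$, while the bracket term $X^{[\zeta,\eta]}(w)(z-w)^{-1}$ picks up a factor $k^{-1/2}$ and disappears; thus the rescaled generators become free Heisenberg fields whose pairings are those of the normalized Killing form. Because $\lie{h}\perp\lie{n}_{\pm}$ and distinct root spaces are orthogonal, one checks directly that $(E_{\beta_i}\,|\,E_{\beta_j})$ and $(F_{\beta_i}\,|\,F_{\beta_j})$ are nonzero multiples of $\delta_{i,j}$, that $(E_{\beta_i}\,|\,F_{\beta_j})=(E_{\beta_i}\,|\,h_r)=(F_{\beta_i}\,|\,h_r)=0$, and that the form is nondegenerate on $\lie{h}$; and $\theta$ fixes the $F_{\beta_i}$ while negating the $E_{\beta_i}$ and $h_r$. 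Combined with the description of $\operatorname{gr}(V^k(\lie{g})^{\mathbb{Z}_2})$ in Section 5, this yields
\begin{equation*}
\lim_{k\to\infty}V^k(\lie{g})^{\mathbb{Z}_2}\;\cong\;\mathcal{H}(m)\otimes\mathcal{H}(d)^{\mathbb{Z}_2},\qquad d=m+l,
\end{equation*}
in which the $F_{\beta_i}$ are the $m$ free generators of $\mathcal{H}(m)$ and the $E_{\beta_i},h_r$ are the $d$ Heisenberg generators of $\mathcal{H}(d)$ on which $\mathbb{Z}_2$ acts by $-1$ (after an orthonormal change of basis of $\lie{h}$ and rescaling of the $E_{\beta_i}$).

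Next I would read off a minimal strong generating set for the limit. Since $\mathcal{H}(m)$ is freely generated by its $m$ weight-$1$ fields, only $\mathcal{H}(d)^{\mathbb{Z}_2}$ remains, and Theorem \ref{strong generators for Hn} applies. For $\lie{g}\neq\mathfrak{sl}_2$ we have $l\geq 2$, hence $d=m+l\geq 3$, so by part (2) the algebra $\mathcal{H}(d)^{\mathbb{Z}_2}$ is of type $\mathcal{W}(2^{d+\binom{d}{2}},3^{\binom{d}{2}},4)$ with generators $\omega^{i,i}_{0,0}$, $\omega^{i,j}_{0,0}$, $\omega^{i,j}_{0,1}$ $(i<j)$ and $\omega^{1,1}_{0,2}$; under the identification $\alpha^j\leftrightarrow\{E_{\beta_i},h_r\}$ these become the weight-$2$ fields $Q^{\beta_a,\beta_b}_{0,0},Q^{h_r,h_s}_{0,0},Q^{h_t,\beta_u}_{0,0}$, the weight-$3$ fields $Q^{\beta_a,\beta_b}_{0,1},Q^{h_r,h_s}_{0,1},Q^{h_t,\beta_u}_{0,1}$, and $Q^{\beta_1,\beta_1}_{0,2}$, which together with the $F_{\beta_i}$ give the asserted generating set of type $\mathcal{W}(1^m,2^{d+\binom{d}{2}},3^{\binom{d}{2}},4)$. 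For $\lie{g}=\mathfrak{sl}_2$ we have $m=l=1$, $d=2$, and part (1) gives that $\mathcal{H}(2)^{\mathbb{Z}_2}$ is of type $\mathcal{W}(2^3,3,4^2)$; the identification $\alpha^1\leftrightarrow E_\beta$, $\alpha^2\leftrightarrow h$ then yields $\{F,Q^{\beta,\beta}_{0,0},Q^{h,h}_{0,0},Q^{h,h}_{0,2},Q^{\beta,h}_{0,0},Q^{\beta,h}_{0,1},Q^{\beta,h}_{0,2}\}$, of type $\mathcal{W}(1,2^3,3,4^2)$, the extra weight-$4$ field $Q^{h,h}_{0,2}$ being the only difference between the two cases.

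It remains to transfer these facts from $k=\infty$ to generic $k$. Since $\operatorname{gr}(V^k(\lie{g})^{\mathbb{Z}_2})\cong\operatorname{gr}(V^k(\lie{g}))^{\mathbb{Z}_2}$ is independent of $k$ and equals that of the limit, the graded character of $V^k(\lie{g})^{\mathbb{Z}_2}$ is the same for all $k\in\mathbb{C}\cup\{\infty\}$, and after rescaling all OPEs among the generators, and the coordinates of their iterated normally ordered products in a weight-graded basis of fixed dimensions, are polynomial in $k^{-1/2}$, with value at $k^{-1/2}=0$ the corresponding data for $\mathcal{H}(m)\otimes\mathcal{H}(d)^{\mathbb{Z}_2}$. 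A decoupling relation expressing a $Q^{\cdot,\cdot}_{a,b}$ not among the chosen generators as a normally ordered polynomial in them amounts to solvability of a linear system with such polynomial entries; since it holds at $k^{-1/2}=0$ it persists for all but finitely many $k$, and the higher-weight decoupling relations follow inductively by repeatedly applying $\circ_1$ with the weight-$3$ and weight-$4$ generators exactly as in the proof of Theorem \ref{strong generators for Hn}. Hence the asserted set strongly generates $V^k(\lie{g})^{\mathbb{Z}_2}$ for generic $k$; and it stays minimal there because a decoupling relation for one of the generators would, after clearing denominators, specialize at $k=\infty$ to such a relation in $\mathcal{H}(m)\otimes\mathcal{H}(d)^{\mathbb{Z}_2}$, contradicting the minimality just established. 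The main obstacle is to make this last step rigorous: one must bound in advance the weight up to which decoupling relations have to be constructed and keep control of the finitely many exceptional values of $k$, which is precisely what the general finiteness theorem of \cite{CL} supplies; for $\lie{g}=\mathfrak{sl}_2$ that exceptional set is pinned down by an explicit computation given elsewhere in the paper.
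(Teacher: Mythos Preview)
Your proposal is correct and follows essentially the same route as the paper: pass to the $k\to\infty$ limit to identify $V^k(\lie{g})^{\mathbb{Z}_2}$ with $\mathcal{H}(m)\otimes\mathcal{H}(d)^{\mathbb{Z}_2}$, invoke Theorem \ref{strong generators for Hn} for the minimal strong generators of the second factor, and then transfer back to generic $k$ via the deformation machinery of \cite{LII,CL}. Your write-up is in fact more detailed than the paper's very brief proof (which simply cites the deformation argument and Theorem \ref{strong generators for Hn}); in particular your enumeration of the weight-$3$ generators also includes $Q^{\beta_a,\beta_b}_{0,1}$ ($a<b$) and $Q^{h_r,h_s}_{0,1}$ ($r<s$), which is consistent with the claimed type $\mathcal{W}(1^m,2^{d+\binom{d}{2}},3^{\binom{d}{2}},4)$ even though the displayed generating set in the statement omits them.
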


\begin{proof}
Let $n = 2m+l = \text{dim}(\lie{g})$. By the deformation argument of \cite{LII}, we have $$\lim_{k \to \infty}V^{k}(\lie{g}) \cong \mathcal{H}(n).$$ Here $\mathcal{H}(n)$ is the rank $n$ Heisenberg algebra with generators $F_{\beta_i}, E_{\beta_i}$ and $h_r$. We shall use the same symbols for the limits of these fields when no confusion may arise. Moreover, $\mathbb{Z}_2$ acts trivially on the rank $m$ Heisenberg subalgebra generated by $\{F_{\beta_i}|\ i = 1,\dots, m$, and it acts by $-1$ on the rank $d = m+ l$ Heisenberg algebra with generators $\{E_{\beta_i}, h_r|\ i = 1,\dots, m,\ \ r = 1,\dots, l\}$.
We get
$$\lim_{k \to \infty}V^{k}(\lie{g})^{\mathbb{Z}_2} \cong \mathcal{H}(m) \otimes \big(\mathcal{H}(d)^{\mathbb{Z}_2}\big).$$ In the limit $k\rightarrow \infty$, the fields $F_{\beta_i}$ are the generators of $\mathcal{H}(m)$, and the remaining quadratic fields are precisely the generators for $\mathcal{H}(d)^{\mathbb{Z}_2}$. The claim in both cases then follows by Theorem \ref{strong generators for Hn}.
\end{proof}

For the reader's convenience, the following table shows the dimension, rank $l$, and the number of positive roots $m$, for each simple Lie algebra in the Cartan-Killing classification.

\begin{center}
 \begin{tabular}{||c c c c||} 
 \hline
 Lie algebra & Dimension & Rank $l $& The number of positive roots $m$ \\ [0.5ex] 
 \hline\hline
 $\mathfrak{sl}_{n+1}$ & $(n+1)^{2}-1$ & $n$ & $\frac{(n^{2}+n)}{2}$ \\ 
 \hline
 $\mathfrak{so}_{2n+1}$ &$\frac{2n(2n+1)}{2}$ &  $n$ & $n^{2}$ \\
 \hline
 $\mathfrak{sp}_{2n}$ & $n(2n+1)$ &  $n$ & $n^{2}$ \\
 \hline
 $\mathfrak{so}_{2n}$ & $\frac{2n(2n-1)}{2}$ &  $n$ & $n^{2}-n$ \\
 \hline
 $G_{2}$ & $14$ & 2 & $6$\\
  \hline
 $F_{4}$ & $52$ & 4 & $24$\\
 \hline
$E_{6}$ & $78$ & 6 & $36$\\
 \hline
 $E_{7}$ & $133$ & 7 & $63$\\
  \hline
 $E_{8}$ & $248$ & 8 & $120$\\ [1ex] 
  \hline
\end{tabular}
\end{center}

\section{The nongeneric set for $V^{k}(\mathfrak{sl}_{2})^{\mathbb{Z}_{2}}$}

\hphantom{X} In this section we work in the usual root basis for $\mathfrak{sl}_{2},$ so we shall change our notation here. Let $\{x,y,h \}$ be an ordered basis of $\mathfrak{sl}_{2}$ satisfying the following commutation relations:
$$[x,y]=h,\enspace [h,x]=2x, \enspace [h,y]=-2y.$$ 
Let $X^{x}, X^{y},X^{h}$ be generating fields for $V_{k}(\mathfrak{sl}_{2}),$ where each of conformal weight $1,$ and satisfies the OPE relations
\begin{equation}\label{opesl2}
X^{x}(z) X^{y}(w) \sim k (z-w)^{-2}+X^{h}(w)(z-w)^{-1},
\end{equation}

\begin{equation}
X^{h}(z)X^{x}(w) \sim 2 X^{x}(w) (z-w)^{-1},
\end{equation}

\begin{equation}
X^{h}(z)X^{y}(w)\sim -2 X^{y}(w) (z-w)^{-1},
\end{equation}

\begin{equation}\label{opesl2'}
X^{h}(z)X^{h}(w)\sim 2k (z-w)^{-2}.
\end{equation}
The affine vertex algebra $V_{k}(\mathfrak{sl}_{2})$ is freely generated by the even generators $X^{x}, X^{y},X^{h}$ and in particular it has a PBW basis as follows
\begin{equation}\label{basissl2}
:\partial^{k^{1}_{1}} X^{x}\cdots \partial^{k^{1}_{s_{1}}} X^{x}\partial^{k^{2}_{1}} X^{y}\cdots \partial^{k^{2}_{s_{2}}} X^{y} \partial^{k^{3}_{1}} X^{h}\cdots \partial^{k^{3}_{s_{3}}} X^{h}:, 
\end{equation}
$ s_{i} \geq0, \enspace k^{i}_{1}\geq \cdots \geq k^{i}_{s_{i}} \geq 0,\enspace for \enspace i=1,2,3.$
\newline

The action of $\theta$ is given by 
\begin{equation*}
\theta(X^{x})= -X^{y},\enspace \enspace\enspace \enspace \theta(X^{y})= -X^{x},\enspace \enspace\enspace \enspace \theta(X^{h})=-X^{h}.
\end{equation*}
Changing the basis to the basis of eigenvectors yields:
\begin{equation}\label{newbasis}
G=X^{x}+ X^{y},\enspace \enspace \enspace \enspace F=X^{x}-X^{y}, \enspace \enspace \enspace \enspace H=X^{h}.
\end{equation}
The nontrivial involution $\theta$ acts on the new generators as follows:
\begin{equation*}
\theta(G)=- G,\enspace \enspace\enspace \enspace \theta(F)= F,\enspace \enspace\enspace \enspace \theta(H)=-H.
\end{equation*}

Define 
\begin{equation*}
\begin{aligned}
Q_{i,j}&= \ :\partial^{i} G(z) \partial^{j} G(z):\ \in (V^{k}(\mathfrak{sl}_{2})^{\mathbb{Z}_{2}})_{(2)},\\
U_{i,j}&= \ :\partial^{i} H(z) \partial^{j}H(z):\  \in (V^{k}(\mathfrak{sl}_{2})^{\mathbb{Z}_{2}})_{(2)},\\
V_{i,j}&= \ :\partial^{i} H(z) \partial^{j}G(z):\ \in (V^{k}(\mathfrak{sl}_{2})^{\mathbb{Z}_{2}})_{(2)}
\end{aligned}
\end{equation*} 
as new generators for $V^{k}(\mathfrak{sl}_{2}),$ where each have weight $i+j+2.$

By Theorem \ref{ch5:mainresult}, the strong generators for $V^k(\mathfrak{sl}_2)$ are $\{F,Q_{0,0},U_{0,0},U_{0,2},V_{0,0},V_{0,1},V_{0,2}\}$. These fields close under OPE in the sense that for any $\alpha_1, \alpha_2$ in the above set, each term in the OPE of $\alpha_1(z) \alpha_2(w)$ can be expressed as a linear combination of normally ordered monomials in these generators. The coefficients of these monomials are called the {\it structure constants} of the OPE algebra, and they are all rational functions of $k$. The set of nongeneric values of $k$ where the strong finite generating set for $V^{k}(\mathfrak{sl}_{2})^{\mathbb{Z}_{2}}$ does not work can be determined here. By Theorem 5.3 of \cite{CL}, the only nongeneric values of $k$ are the poles of these structure constants. Clearly there are at most finitely many such poles. Using K. Thielemans' Mathematica package \cite{T}, the full OPE algebra among these generators was calculated, and we find that the poles of the structure constants lie in the set $$\{0,\frac{16}{51},\frac{16}{9},-\frac{32}{3}\}.$$ It follows that all other values of $k$ are generic.

An immediate consequence is the following:
\begin{cor}
 For $k\neq0,\frac{16}{51},\frac{16}{9},-\frac{32}{3},$ $V^{k}(\lie{sl}_{2})^{\mathbb{Z}_{2}}$ is of type $\mathcal{W}(1,2^3, 3, 4^2)$.
\end{cor}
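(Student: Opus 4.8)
The plan is to obtain the corollary by combining Theorem~\ref{ch5:mainresult}(2) with an explicit description of the exceptional set of $k$. By Theorem~\ref{ch5:mainresult}(2), the seven fields $\mathcal{S} = \{F, Q_{0,0}, U_{0,0}, U_{0,2}, V_{0,0}, V_{0,1}, V_{0,2}\}$ form a minimal strong generating set of $V^k(\mathfrak{sl}_2)^{\mathbb{Z}_2}$ for all but finitely many $k$, so the content of the corollary is that these exceptional values are precisely $0, \frac{16}{51}, \frac{16}{9}, -\frac{32}{3}$. The first step is to invoke Theorem~5.3 of \cite{CL}: it says that $\mathcal{S}$ strongly generates $V^k(\mathfrak{sl}_2)^{\mathbb{Z}_2}$ for every $k$ that is not a pole of a structure constant of the OPE algebra carried by $\mathcal{S}$, and that these structure constants are rational functions of $k$. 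Concretely they arise from the $k$-deformations of the decoupling relations that eliminate $U_{0,2m}$ for $m \ge 2$, $V_{0,m}$ for $m \ge 3$, and the redundant weight-$4$ field --- the $k$-dependent analogues of the relations used in the proofs of the Dong--Nagatomo theorem and of Theorem~\ref{strong generators for Hn}. Thus the problem reduces to locating these poles.

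The second step is the computation. I would start from the OPE relations \eqref{opesl2}--\eqref{opesl2'}, pass to the eigenbasis $F, G, H$ of \eqref{newbasis}, and implement the quadratic fields $Q_{a,b}, U_{a,b}, V_{a,b}$ in Thielemans' Mathematica package \cite{T}. For each unordered pair from $\mathcal{S}$ one computes all circle products $\circ_n$, $n \ge 0$; these a priori involve weight $\le 8$ fields such as $U_{0,4}, U_{0,6}, V_{0,3}, V_{0,4}, V_{0,5}$ and various $Q_{0,m}$, so one must first derive the deformed decoupling relations (the $k$-dependent analogues of \eqref{PH14}, \eqref{n=1:higherdecoup}, \eqref{eqQ4}, \eqref{n=2:higherdecoup} and \eqref{equationQ'4}) expressing each such field as a normally ordered polynomial in $\mathcal{S}$ and its derivatives. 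Substituting these back turns every circle product into a $\mathbb{C}(k)$-linear combination of normally ordered monomials in $\mathcal{S}$; the denominators of the coefficients factor over $\mathbb{Q}$, and collecting their roots gives exactly $\{0, \frac{16}{51}, \frac{16}{9}, -\frac{32}{3}\}$.

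It then remains to draw the conclusion. For $k$ outside this set, the seven generators close under OPE with no poles, so $\mathcal{S}$ strongly generates $V^k(\mathfrak{sl}_2)^{\mathbb{Z}_2}$; and it remains a minimal generating set for each such $k$ by the same argument as in Theorem~\ref{strong generators for Hn}, since the associated graded algebra $\text{gr}(V^k(\mathfrak{sl}_2)^{\mathbb{Z}_2}) \cong \mathbb{C}[\partial^a F \mid a \ge 0] \otimes \mathbb{C}[\partial^a G, \partial^a H \mid a \ge 0]^{\mathbb{Z}_2}$ is independent of $k$ and its first relations of the form \eqref{ideal'} occur in weight $\ge 5$, so no further generator can be decoupled, uniformly in $k$. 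Hence $V^k(\mathfrak{sl}_2)^{\mathbb{Z}_2}$ is of type $\mathcal{W}(1, 2^3, 3, 4^2)$ for all $k \notin \{0, \frac{16}{51}, \frac{16}{9}, -\frac{32}{3}\}$.

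The main obstacle is the bookkeeping in the second step: the decoupling relations in weights $6$, $7$ and $8$ grow quickly, and one must be careful that no nongeneric value is missed. Besides the poles of the structure constants, a careful implementation should verify that the leading coefficients through which the eliminated fields get decoupled --- the $k$-deformations of coefficients such as the $8 + 4k$ in $\omega_{0,2} \circ_1 \omega_{0,2k}$ and the $-1$ in $\omega^{2,2}_{0,1} \circ_1 \omega^{1,2}_{0,k}$ --- do not vanish for any $k$ outside the listed set. Theorem~5.3 of \cite{CL} guarantees that such degenerations are already recorded among the poles, so once the OPE computation is carried out the conclusion is automatic.
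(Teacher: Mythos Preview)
Your proposal is correct and follows essentially the same approach as the paper: invoke Theorem~\ref{ch5:mainresult}(2) for the generic statement, apply Theorem~5.3 of \cite{CL} to reduce the nongeneric set to the poles of the structure constants, and compute the full OPE algebra among the seven generators via Thielemans' package \cite{T} to find that the poles lie in $\{0,\frac{16}{51},\frac{16}{9},-\frac{32}{3}\}$. One small overstatement: the corollary only asserts that values \emph{outside} this set are generic, not that these four values are necessarily nongeneric, so ``precisely'' is slightly stronger than what is claimed or proved.
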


\section{Acknowledgements}

I would like to express my deep appreciation and gratitude to Prof. Linshaw for his immense knowledge, useful discussions, and valuable suggestions throughout this work.

\end{document}